\newtheorem{thm}{Theorem}[section]
\newtheorem{cor}[thm]{Corollary}
\newtheorem{lem}[thm]{Lemma}
\newtheorem{prop}[thm]{Proposition}
\newtheorem{prop-def}[thm]{Proposition-Definition}
\theoremstyle{definition}
\newtheorem{Def}[thm]{Definition}
\theoremstyle{remark}
\newtheorem{exm}[thm]{\bf Example}
\numberwithin{equation}{section}
\def\la{\lambda}
\def\al{\alpha}
\def\deg{{\rm deg}}
\def\al{\alpha}
\def\e{\varepsilon}
\def\N{\mathbb{N}}
\def\Id{{\rm Id}}
\def\S{\mathcal{S}}
\def\Sum{\sum\limits}
\def\Prod{\prod\limits}
\newcommand{\lr}[1]{\langle\,#1\,\rangle}
\def\S{\mathcal{S}}
\def\Res{{\rm Res}}
\def\e{{\bf e}}
\def\x{{\bf x}}
\def\C{\mathbb{C}}
\def\ot{\otimes}
\def\V{\mathcal{V}}
\def\E{\mathcal{E}}
\def\G{\mathcal{G}}
\def\B{\mathcal{B}}
\def\c{\mbox{\textbf{c}}}
\def \SV {\mathcal{S}}
\begin{document}
\title[Characteristic Polynomials of Hyperstars and Hyperpaths]
{A Combinatorial Method for Computing Characteristic Polynomials of Starlike Hypergraphs}

\author[Y.-H. Bao, Y.-Z. Fan, Y. Wang and M. Zhu] {Yan-Hong Bao, Yi-Zheng Fan*, Yi Wang and Ming Zhu}

\subjclass[2010]{05C65, 13P15, 15A18.}
\date{\today}
\thanks{$^*$The corresponding author}

\keywords{Resultant, hypergraph, characteristic polynomial, adjacency tensor, chip-firing}

\maketitle

\dedicatory{}%
\commby{}%

\begin{abstract}
By using the Poisson formula for resultants and the variants of chip-firing game on graphs, we provide a combinatorial method for computing a class of
of resultants, i.e. the characteristic polynomials of the adjacency tensors of starlike hypergraphs including hyperpaths and hyperstars,
which are given recursively and explicitly.
\end{abstract}

\section{Introduction}
Here a tensor (or hypermatrix) refers to a multi-array of entries in some field,
   which can be viewed to be the coordinates of the classical tensor (as multilinear function) under an orthonormal basis.
The eigenvalues of a tensor were introduced by Qi \cite{Qi, Qi2} and Lim \cite{Lim} independently.
To find the eigenvalues of a tensor, Qi \cite{Qi, Qi2} introduced the characteristic polynomial of a tensor,
 which is defined to be a resultant of a system of homogeneous polynomials.
In general, there is no an explicit polynomial formula yet for resultants except some very special cases;
and many fundamental questions about resultants still remain open.

As we know, there are mainly three tools to compute a concrete resultant.
The first one is Koszul complex, whose terms is given by the graded tensor product of a polynomial algebra and an exterior algebra,
and the differential is built from objective polynomials in the resultant.
The resultant is exactly equal to a certain characteristic of the related Koszul complex.
The second one is generalized trace, which is defined by  Morozov and Shakirov \cite{MS2}.
Using the generalized traces and the Schur function, Hu et.al gave an expression of the characteristic polynomial of a tensor \cite{HHLQ}.
Shao, Qi and Hu gave a graph theoretic formula for the generalized trace \cite{SQH}.
As an application, Cooper and Dulte computed the characteristic polynomial of the adjacency tensor of a single edge hypergraph \cite{CD}.
The third tool is Poisson formula, which may provide an inductively computing method, see \cite[Chapter 13, Theorem 1.2]{GKZ} or \cite[Proposition 2.7]{Jou}.
For example, Cooper and Dutle computed the spectrum of the ``all ones" tensors using the Poisson formula, see \cite[Theorem 3]{CD2}.
We refer to \cite[Chapter 13]{GKZ} and \cite[Chapter 3]{CLO} for an overview of calculation of resultants.

Recently, spectral hypergraph theory is proposed to explore connections between
the structure of a uniform hypergraph and the eigenvalues of some related symmetric tensors.
Cooper and Dutle \cite{CD} proposed the concept of adjacency tensor for a uniform hypergraph.
Shao et.al \cite{SSW} proved that the adjacency tensor of a connected $k$-uniform hypergraph $G$ has a symmetric H-spectrum if and only if $k$ is even and $G$ is odd-bipartite.
This result gives a certification to check whether a connected even-uniform hypergraph is odd-bipartite or not.

The characteristic polynomial of a hypergraph is defined to be the characteristic polynomial of its adjacency tensor.
In this paper, we mainly aims to give a lower dimension formula to compute the characteristic polynomial of hypergraphs based on Poisson formula and variants of chip-firing game,
and give the characteristic polynomials of starlike hypergraphs including hyperstars and hyperpaths, recursively and explicitly.

For simplicity of notation, we denote $[n]=\{1, 2, \cdots, n\}$ and $[m, n]=\{m, m+1, \cdots, n\}$ for integers $m<n$.

\section{Preliminaries}

In this section, we mainly recall some basic notions and useful results on resultants and hypergraphs.

\subsection{Resultants} Let $F_1(x_1, \cdots, x_n), \cdots, F_n(x_1, \cdots, x_n)$ be $n$ homogeneous polynomials over $\C$ in variables $x_1, \cdots, x_n$,
and the degree of $F_i$ is $d_i>0$ for $i \in [n]$.
An important question is whether the system of equations
\begin{align}\label{general eqs}
\begin{cases}
F_1(x_1, \cdots, x_n)=0,\\
~~~\cdots \\
F_n(x_1, \cdots, x_n)=0
\end{cases}
\end{align}
admits nontrivial solutions.

Generally, each $F_i$ can be written as
\[F_i=\sum_{|\al|=d_i} c_{i,\al}\x^\al,\]
where $\al=(i_1, \cdots, i_n)$, $|\al|=i_1+\cdots+i_n$ and $\x^\al=x_1^{i_1}\cdots x_n^{i_n}$. Note that the number of $\al$'s with $|\al|=d$ is $n+d-1 \choose d-1$.

For each possible pair of indices $i, \al$, we introduce a variable $u_{i, \al}$.
Then, given a polynomial $P\in \C[u_{i, \al}\colon |\al|=d_i, i\in [n]]$,
we let $P(F_1, \cdots, F_n)$ denote the value obtained by replacing each variable $u_{i, \al}$ in $P$ with the corresponding coefficient $c_{i, \al}$.

\begin{thm}\cite[Chapter 3, Theorem 2.3]{CLO} For fixed positive degrees $d_1, \cdots, d_n$, there exists a unique polynomial $\Res\in \mathbb{Z}[u_{i,\al}]$ satisfying the following properties:

\begin{enumerate}
\item[(i)] If $F_1, \cdots, F_n\in \C[x_1, \cdots, x_n]$ are homogeneous of degrees $d_1, \cdots, d_n$ respectively, the system \eqref{general eqs} has
a nontrivial solution if and only if $\Res(F_1, \cdots, F_n)=0$.

\item[(ii)] $\Res(x_1^{d_1}, \cdots, x_n^{d_n})=1$.

\item[(iii)] $\Res$ is irreducible, even regarded as a polynomial in $\C[u_{i,\al}]$.
\end{enumerate}
\end{thm}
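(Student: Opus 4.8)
The plan is to establish existence, uniqueness, and the three listed properties together by elimination theory, treating the coefficients $u_{i,\al}$ as coordinates on an affine parameter space $V \cong \C^{N}$, where $N = \Sum_{i\in[n]} \binom{n+d_i-1}{d_i}$ is the total number of coefficients. Inside $V \times \mathbb{P}^{n-1}$ I would form the incidence variety
\[
\widetilde{Z} = \{(c, x) \in V\times\mathbb{P}^{n-1} : F_i^{c}(x) = 0 \text{ for all } i \in [n]\},
\]
where $F_i^{c}$ is the $i$-th polynomial with its coefficients specialized to $c$. This set is Zariski closed, and the first key step is the main theorem of elimination theory: since $\mathbb{P}^{n-1}$ is a complete variety, the projection $\pi : V \times \mathbb{P}^{n-1} \to V$ is a closed map, so the image $Z = \pi(\widetilde{Z})$—which is exactly the locus of coefficient tuples for which \eqref{general eqs} has a nontrivial solution—is Zariski closed in $V$. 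This provides the polynomial description underlying property (i).

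The second step is to show that $Z$ is an irreducible hypersurface, which is what forces a single irreducible defining polynomial. For irreducibility I would use the second projection $p : \widetilde{Z} \to \mathbb{P}^{n-1}$: for fixed $x$, each condition $F_i^{c}(x) = \Sum_{\al} u_{i,\al}\,\x^{\al} = 0$ is a single linear equation in the block of coefficients of $F_i$, so the fiber $p^{-1}(x)$ is a linear subspace of $V$ of dimension $N-n$. Thus $p$ exhibits $\widetilde{Z}$ as a bundle over the irreducible base $\mathbb{P}^{n-1}$ with irreducible fibers of constant dimension, whence $\widetilde{Z}$ is irreducible of dimension $(n-1)+(N-n) = N-1$, and so $Z=\pi(\widetilde{Z})$ is irreducible of dimension at most $N-1$. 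To see that $Z$ has dimension exactly $N-1$, I would show that over a generic point of $Z$ the fiber of $\pi$ is finite—a generic system possessing a nontrivial zero has only finitely many common projective zeros, by a B\'ezout-type bound—so that $\dim Z = \dim \widetilde{Z} = N-1$. Since $\C[u_{i,\al}]$ is a UFD, an irreducible codimension-one subvariety is the zero set of a single irreducible polynomial, which I take as $\Res$ up to a scalar; this yields (iii).

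Finally I would pin down normalization and integrality. The whole construction is defined over $\mathbb{Z}$, so $Z$ is cut out by equations with integer coefficients; by Galois descent the irreducible defining polynomial can be chosen in $\mathbb{Q}[u_{i,\al}]$ and, after clearing denominators and removing content, in $\mathbb{Z}[u_{i,\al}]$, determined up to sign. To fix the scalar and sign, I evaluate at the diagonal system $(x_1^{d_1}, \cdots, x_n^{d_n})$: this system has only the trivial zero, so its coefficient point lies outside $Z$ and $\Res$ does not vanish there; checking that the value is a unit allows the normalization $\Res(x_1^{d_1}, \cdots, x_n^{d_n}) = 1$ of (ii), which at the same time makes $\Res$ unique.

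I expect the main obstacle to be the dimension count showing that $Z$ has codimension exactly one, namely the generic finiteness of the fibers of $\pi$, together with verifying that the value at the diagonal system is a unit so that the integral normalization in (ii) is actually attainable; by contrast, the closedness of the projection and the irreducibility of $\widetilde{Z}$ are comparatively clean once the bundle structure of $p$ is in hand.
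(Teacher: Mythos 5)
The paper gives no proof of this statement; it is imported verbatim from the cited reference, so there is no internal argument to compare against. Your sketch follows the standard elimination-theoretic route of that source: incidence variety $\widetilde{Z}\subset V\times\mathbb{P}^{n-1}$, closedness of the image $Z$ under the proper projection, irreducibility of $\widetilde{Z}$ via the second projection whose fibers are linear subspaces of codimension $n$ (one independent linear condition per block of coefficients), and the principality of the ideal of an irreducible hypersurface in a polynomial ring. That architecture is correct and is the right way to prove the theorem.

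However, the two steps you flag as ``obstacles'' are exactly where the content lies, and one of them cannot be closed by the means you propose. First, the codimension count: appealing to ``a B\'ezout-type bound'' for generic finiteness of $\pi^{-1}$ over $Z$ is close to circular, since finiteness of the common zero locus is precisely what fails on the excess locus you are trying to control. The standard fix is cheap: exhibit one coefficient point of $Z$ with finite fiber, e.g.\ $F_i=x_i^{d_i}$ for $i\le n-1$ and $F_n=x_1^{d_n}$, whose unique common zero is $(0:\cdots:0:1)$; the fiber-dimension theorem applied to the irreducible $\widetilde{Z}$ then forces $\dim Z=\dim\widetilde{Z}=N-1$. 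Second, and more seriously, the normalization in (ii) is not a ``check.'' Let $P$ be the primitive integral generator of $I(Z)$ (unique up to sign) and $m=P(x_1^{d_1},\cdots,x_n^{d_n})\in\mathbb{Z}\setminus\{0\}$. You may divide by $m$ and remain in $\mathbb{Z}[u_{i,\al}]$ only if $m=\pm 1$, and nothing in your construction excludes $|m|>1$. Proving $m=\pm 1$ requires a genuinely new ingredient: classically one introduces the Macaulay determinant $D\in\mathbb{Z}[u_{i,\al}]$, shows it vanishes on $Z$ and hence is divisible by $P$ in $\mathbb{Z}[u_{i,\al}]$, and computes $D=1$ at the monomial system because the relevant multiplication matrix is the identity; divisibility in $\mathbb{Z}$ then gives $m\mid 1$. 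Without some such auxiliary polynomial your argument delivers (i), (iii), and a rational normalization, but not the conjunction of integrality with (ii) that the theorem asserts.
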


$\Res(F_1, \cdots, F_n)$ is called the \emph{resultant} of $F_1, \cdots, F_n$.
Resultants have an important application in algebraic geometry, algebraic combinatorics and spectral hypergraph theory.
However, it is very difficult to compute the resultant of general polynomials.
Here, we only list some useful properties and calculation methods of resultants which will be used in this paper.

\begin{lem}\cite[Lemma 3.2]{CD}\label{Res(FG)=ResFResG}
Let $F_1, \cdots, F_n\in \C[x_1, \cdots, x_n]$ be homogeneous polynomials of degree $d_1, \cdots, d_n$ respectively,
and let $G_1, \cdots, G_m\in \C[y_1, \cdots, y_m]$ be homogeneous polynomials of degree $\delta_1, \cdots, \delta_m$ respectively.
Then
\[\Res(F_1, \cdots, F_n, G_1, \cdots, G_m)=\Res(F_1, \cdots, F_n)^{\prod\limits_{j=1}^m\delta_j} \Res(G_1, \cdots, G_m)^{\prod\limits_{i=1}^nd_i}.\]
\end{lem}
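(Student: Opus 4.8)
The plan is to identify the left-hand side, regarded as a polynomial in the coefficients of the $F_i$ and $G_j$, and to pin it down through three pieces of data: its vanishing locus, its multidegrees, and a single normalization. The crucial structural feature is that the $F_i$ involve only $x_1,\dots,x_n$ while the $G_j$ involve only $y_1,\dots,y_m$, so the combined system decouples into two blocks.

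First I would analyze when the combined system $F_1=\cdots=F_n=G_1=\cdots=G_m=0$ admits a nontrivial solution $(x,y)\in\C^{n}\times\C^{m}$. If $x\neq 0$ then $x$ is a nontrivial zero of the $F$-block; if $x=0$ then $y\neq 0$ is a nontrivial zero of the $G$-block; and conversely any nontrivial zero of either block extends by zeros in the remaining variables to a nontrivial zero of the combined system. Hence, writing $R=\Res(F_1,\dots,F_n,G_1,\dots,G_m)$, $P=\Res(F_1,\dots,F_n)$ and $Q=\Res(G_1,\dots,G_m)$ as polynomials in the respective coefficient variables, property (i) of the resultant applied to all three systems yields the equality of vanishing loci
\[
V(R)=V(P)\cup V(Q).
\]

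Next I would invoke irreducibility, property (iii): $P$ and $Q$ are irreducible and, living in disjoint sets of coefficient variables, are non-associate. By the Nullstellensatz together with unique factorization, the equality of vanishing loci forces $R=c\,P^{a}Q^{b}$ for some $c\in\C^{\times}$ and integers $a,b\ge 1$ (both exponents are positive since $R$ must vanish on each of the two distinct irreducible hypersurfaces $V(P)$ and $V(Q)$). To compute the exponents I would compare multidegrees, using the standard fact (see, e.g., \cite{CLO,GKZ}) that a resultant of polynomials of degrees $e_1,\dots,e_N$ is homogeneous of degree $\prod_{k\neq i}e_k$ in the coefficients of the $i$-th polynomial. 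Reading off the degree in the coefficients of $G_1$, the left side has degree $\bigl(\prod_{i=1}^{n}d_i\bigr)\bigl(\prod_{j=2}^{m}\delta_j\bigr)$ while $P^{a}Q^{b}$ has degree $b\prod_{j=2}^{m}\delta_j$, giving $b=\prod_{i=1}^{n}d_i$; symmetrically, reading off the coefficients of $F_1$ gives $a=\prod_{j=1}^{m}\delta_j$. Finally, specializing $F_i=x_i^{d_i}$ and $G_j=y_j^{\delta_j}$ and applying the normalization property (ii) to each of the three systems gives $1=c\cdot 1^{a}\cdot 1^{b}$, so $c=1$, which completes the identity.

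I expect the main obstacle to be the passage from the set-theoretic statement $V(R)=V(P)\cup V(Q)$ to the multiplicative identity. The point is that $R$ here is the generic resultant of $n+m$ forms of degrees $d_1,\dots,d_n,\delta_1,\dots,\delta_m$ \emph{specialized} to the block-structured locus where all mixed coefficients vanish; this specialization is genuinely reducible even though the generic resultant is irreducible, so one must argue on the specialized polynomial and apply the Nullstellensatz and the homogeneity bookkeeping carefully. The decoupling step and the normalization step are then routine once this framework is set up.
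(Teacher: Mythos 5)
Your proposal is correct and complete. Note that the paper itself supplies no proof of this statement: it is quoted verbatim from \cite[Lemma 3.2]{CD}, so there is no in-paper argument to compare against. Your route --- decoupling the vanishing locus into $V(R)=V(P)\cup V(Q)$ via property (i), then using irreducibility (property (iii)) together with the Nullstellensatz and unique factorization to force $R=c\,P^{a}Q^{b}$ with $a,b\ge 1$, then fixing the exponents by multidegree bookkeeping and the constant by the normalization $F_i=x_i^{d_i}$, $G_j=y_j^{\delta_j}$ (property (ii)) --- is the standard proof of this product formula and is essentially the argument given in the cited source. Two small points worth making explicit: the homogeneity fact you invoke for the degree count is exactly the content of the paper's Lemma \ref{Res(la)=laRes} (the scaling identity $\Res(\dots,\la F_j,\dots)=\la^{\prod_{i\neq j}d_i}\Res(\dots)$ is equivalent to homogeneity of the stated degree in the coefficients of the $j$-th form), and this homogeneity survives the specialization to the block-structured coefficient locus because setting variables to zero in a polynomial homogeneous of degree $d$ in a block either kills it or preserves that degree --- and it is not killed here since the specialized resultant equals $1$ at the normalization point. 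You correctly flag the only genuinely delicate point, namely that irreducibility of the generic resultant does not transfer to its specialization, and you argue directly on the specialized polynomial, which is what is needed.
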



\begin{lem}\cite[Chapter 3, Theorem 3.1]{CLO} \label{Res(la)=laRes}
For a fixed $j \in [n]$,
\[\Res(F_1, \cdots, \la F_j, \cdots, F_n)=\la^{d_1\cdots d_{j-1}d_{j+1} \cdots d_n} \Res(F_1, \cdots, F_n),\]
where $d_i$ is the degree of $F_i$ for each $i \in [n]$.
\end{lem}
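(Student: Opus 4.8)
The plan is to recognize that the asserted identity is nothing but the statement that $\Res$, regarded as the universal polynomial in the coefficient variables $u_{i,\al}$, is \emph{homogeneous} of degree $D_j:=d_1\cdots d_{j-1}d_{j+1}\cdots d_n$ in the block $U_j:=\{u_{j,\al}\colon |\al|=d_j\}$ of coefficients of the $j$-th form. Indeed, passing from $F_j$ to $\la F_j$ replaces each coefficient $c_{j,\al}$ by $\la c_{j,\al}$, so once block-homogeneity of degree $D_j$ is known the factor $\la^{D_j}$ pops out at once. Thus I would split the work into two parts: first, showing that $\Res$ is homogeneous in the block $U_j$ (i.e. that a single degree governs all of its monomials in $U_j$); second, identifying that degree as $D_j$.

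For the first part I would exploit irreducibility. For $\la\neq0$ the equation $\la F_j=0$ has the same zero set as $F_j=0$, so the system $(F_1,\dots,\la F_j,\dots,F_n)$ admits a nontrivial solution exactly when $(F_1,\dots,F_n)$ does. By property (i) of the characterization theorem above, this means the hypersurface $V(\Res)$ in coefficient space is invariant under the linear substitution $T_\la\colon u_{j,\al}\mapsto\la u_{j,\al}$ (with the other variables fixed). Consequently $T_\la(\Res)$ vanishes on $V(\Res)$; since $\Res$ is irreducible over $\C$ by property (iii), the Nullstellensatz gives $\Res\mid T_\la(\Res)$, and because $T_\la$ is an invertible linear change of variables it preserves total degree, whence $T_\la(\Res)=c(\la)\Res$ for some nonzero scalar $c(\la)$. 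Writing $\Res=\sum_e R_e$ with $R_e$ the part homogeneous of degree $e$ in $U_j$, the relation $\sum_e\la^e R_e=c(\la)\sum_e R_e$ forces $c(\la)=\la^e$ on every nonzero $R_e$; hence only one $R_e$ survives and $\Res$ is block-homogeneous of some degree $D_j$, with $c(\la)=\la^{D_j}$.

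To pin down $D_j$ I would specialize to the diagonal monomial system $F_i=c_i x_i^{d_i}$, whose only surviving coefficient in block $U_i$ is the one attached to $x_i^{d_i}$. A short induction on $n$ using Lemma~\ref{Res(FG)=ResFResG} (splitting off the last variable, with $m=1$ and $\delta_1=d_n$), together with the base case $\Res(c\,x^{d})=c$ and the normalization (ii), yields
\[\Res(c_1x_1^{d_1},\dots,c_nx_n^{d_n})=\prod_{i=1}^n c_i^{\,\prod_{l\neq i}d_l}.\]
Reading off the exponent of $c_j$ shows that the block degree $D_j$ equals $\prod_{l\neq j}d_l$, as required; the only point to verify is that this specialization does not annihilate the top $U_j$-homogeneous part, which is guaranteed because the displayed product is visibly nonzero.

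The conceptually delicate step is the homogeneity argument of the second paragraph: the subtle points are that $T_\la$ genuinely preserves total degree (so that divisibility upgrades to equality up to a scalar) and that irreducibility is exactly what collapses the a priori many block-degrees of $\Res$ into a single one. Once this is secured the determination of $D_j$ is routine, resting only on Lemma~\ref{Res(FG)=ResFResG} and the normalization, so I would expect the homogeneity reduction—rather than any explicit resultant calculation—to be the real heart of the proof.
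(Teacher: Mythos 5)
Your argument is correct, but there is nothing in the paper to compare it against: Lemma~\ref{Res(la)=laRes} is imported verbatim from \cite[Chapter~3, Theorem~3.1]{CLO} and the authors give no proof, so any comparison is really with the textbook. Your route --- (a) use property (i) to see that the hypersurface $V(\Res)$ is stable under the scaling $T_\la\colon u_{j,\al}\mapsto\la u_{j,\al}$, (b) use irreducibility (property (iii)) plus the Nullstellensatz and degree preservation to get $T_\la(\Res)=c(\la)\Res$ and hence block-homogeneity in $U_j$ of a single degree $D_j$, and (c) pin down $D_j=\prod_{l\neq j}d_l$ by evaluating on the diagonal system $c_ix_i^{d_i}$ via Lemma~\ref{Res(FG)=ResFResG} and the normalization (ii) --- is sound and is essentially the standard derivation of the multihomogeneity of the resultant. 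Two small points you should make explicit if you write this up: the induction in step (c) needs the $n=1$ convention $\Res(c\,x^{d})=c$ (which does follow from properties (i)--(iii) for a single form in one variable, but is a degenerate case of Lemma~\ref{Res(FG)=ResFResG} with $m=1$ that deserves a sentence); and your argument only covers $\la\neq 0$, so you should add that both sides of the asserted identity are polynomials in $\la$ agreeing on $\C\setminus\{0\}$, hence everywhere. Neither is a gap in substance. The real content, as you say, is the homogeneity reduction in step (b), and that part is airtight: $(\Res)$ is prime because the polynomial ring is a UFD and $\Res$ is irreducible over $\C$, so vanishing of $T_\la(\Res)$ on $V(\Res)$ does give divisibility, and equality of total degrees upgrades it to proportionality.
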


Next, we recall the Poisson formula.
Given homogeneous polynomials $F_1, \cdots, F_n\in \mathbb{C}[x_1, \cdots, x_n]$ of degree $d_1, \cdots, d_n$ respectively, let
\begin{align}
f_i(x_1, \cdots, x_{n-1})&=F_i(x_1, \cdots, x_{n-1}, 1), \ \  (1\le i\le n)\\
{\bar F}_{i}(x_1, \cdots, x_{n-1})&=F_i(x_1, \cdots, x_{n-1}, 0), \ \ (1\le i\le n-1).
\end{align}
Observe that $\bar{F}_1, \cdots, \bar{F}_{n-1}$ are still
homogeneous in $\mathbb{C}[x_1, \cdots, x_{n-1}]$ of degree
$d_1, \cdots, d_{n-1}$ respectively, but $f_1,\ldots,f_n$ are not homogeneous in general.

\begin{lem}[Poisson formula]\label{lower number of eqs} Keep the above notation.
If $\Res(\bar{F}_1, \cdots, \bar{F}_{n-1})\neq 0$, then the quotient algebra
$A=\dfrac{\mathbb{C}[x_1, \cdots, x_{n-1}]}{\lr{f_1, \cdots, f_{n-1}}}$ has dimension $d_1\cdots d_{n-1}$ as a vector space over $\mathbb{C}$,
where $\lr{f_1, \cdots, f_{n-1}}$ is the ideal of the polynomial algebra $\mathbb{C}[x_1, \cdots, x_{n-1}]$ generated
by $f_1, \cdots, f_{n-1}$, and
\begin{align}
\Res(F_1, \cdots, F_{n})=\Res({\bar F}_1, \cdots, {\bar F}_{n-1})^{d_n} \det(m_{f_{n}}\colon A \to A)
\end{align}
where $m_{f_n}\colon A \to A$ is the multiplication map given by $f_n$.
\end{lem}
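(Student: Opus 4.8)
The plan is to prove the two assertions separately, both leaning on the hypothesis $\Res(\bar F_1,\dots,\bar F_{n-1})\neq 0$. First I would unwind that hypothesis geometrically. By property (i) of the resultant, $\Res(\bar F_1,\dots,\bar F_{n-1})\neq 0$ means the homogeneous system $\bar F_1=\cdots=\bar F_{n-1}=0$ has only the trivial zero in $\C^{n-1}$; equivalently, the projective common zeros of $F_1,\dots,F_{n-1}$ in $\mathbb P^{n-1}$ avoid the hyperplane at infinity $\{x_n=0\}$ and therefore all lie in the affine chart $\{x_n=1\}$, where they are precisely the zeros of $f_1,\dots,f_{n-1}$. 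Since $\bar F_1,\dots,\bar F_{n-1}$ are $n-1$ forms in $n-1$ variables with no common nontrivial zero, they form a homogeneous system of parameters, hence a regular sequence in $R=\C[x_1,\dots,x_{n-1}]$. Observing that $\bar F_i$ is exactly the top-degree form of $f_i$, passing to initial forms exhibits a flat degeneration of $A=R/\lr{f_1,\dots,f_{n-1}}$ onto $R/\lr{\bar F_1,\dots,\bar F_{n-1}}$; comparing Hilbert series (the complete-intersection series $\prod_{i=1}^{n-1}(1-t^{d_i})/(1-t)$ gives $\prod_{i=1}^{n-1}d_i$ at $t=1$) yields $\dim_{\mathbb C}A=d_1\cdots d_{n-1}=:D$.

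For the resultant identity I would first work on the Zariski-dense locus (inside the open set where $\Res(\bar F_1,\dots,\bar F_{n-1})\neq 0$) on which $f_1,\dots,f_{n-1}$ have $D$ distinct simple zeros $\xi_1,\dots,\xi_D\in\C^{n-1}$, with projective representatives $\hat\xi_j=(\xi_j,1)$. Fixing $F_1,\dots,F_{n-1}$ and viewing $\Res(F_1,\dots,F_n)$ as a polynomial in the coefficients of $F_n$, Lemma~\ref{Res(la)=laRes} shows it is homogeneous of degree $\prod_{i<n}d_i=D$ in those coefficients, while property (i) shows it vanishes exactly when $F_n(\hat\xi_j)=0$ for some $j$. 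Hence it is divisible by each linear form $F_n\mapsto F_n(\hat\xi_j)$, and a degree count forces
\[\Res(F_1,\dots,F_n)=C\cdot\prod_{j=1}^{D}F_n(\hat\xi_j)\]
with $C$ independent of $F_n$. To pin down $C$ I would specialize $F_n=x_n^{d_n}$, so that $F_n(\hat\xi_j)=1$ for all $j$; using the standard multiplicativity of $\Res$ in a single argument (distinct from Lemma~\ref{Res(FG)=ResFResG}) together with the base case $\Res(F_1,\dots,F_{n-1},x_n)=\Res(\bar F_1,\dots,\bar F_{n-1})$, which one checks from property (ii) and the vanishing criterion, gives $C=\Res(F_1,\dots,F_{n-1},x_n^{d_n})=\Res(\bar F_1,\dots,\bar F_{n-1})^{d_n}$.

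It then remains to replace the product $\prod_j F_n(\hat\xi_j)$ by $\det(m_{f_n})$. On the simple-zero locus the evaluation maps at $\xi_1,\dots,\xi_D$ define an algebra isomorphism $A\xrightarrow{\ \sim\ }\C^{D}$ by the Chinese Remainder Theorem, under which $m_{f_n}$ becomes the diagonal operator with entries $f_n(\xi_j)=F_n(\hat\xi_j)$ (the Stickelberger eigenvalue theorem); hence $\det(m_{f_n})=\prod_{j=1}^{D}F_n(\hat\xi_j)$, and combining with the previous paragraph proves the formula on the dense locus. Finally I would extend it to the whole locus $\Res(\bar F_1,\dots,\bar F_{n-1})\neq 0$ by continuity: there $\dim_{\mathbb C}A\equiv D$ is constant, so $\det(m_{f_n})$ depends polynomially on the coefficients, as do both sides, and an identity of polynomials valid on a Zariski-dense set holds identically.

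The main obstacle I anticipate is the factorization step and its bookkeeping of multiplicities: establishing divisibility of $\Res$ by each $F_n(\hat\xi_j)$ and, more delicately, verifying that when zeros collide the algebraic family $m_{f_n}$ records the confluent multiplicities correctly, so that the determinant on the degenerate locus is genuinely the limit of the simple-zero values. The eigenvalue theorem settles the simple case cleanly, but making the limiting argument rigorous—rather than merely invoking continuity—is where the care is needed.
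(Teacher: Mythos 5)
Your proposal is correct in outline and follows essentially the standard argument: the paper gives no proof of this lemma at all, merely citing Cox--Little--O'Shea (Chapter 3, Theorem 3.4), and your route --- regular-sequence/Hilbert-series dimension count, factorization of the resultant over the common zeros of $F_1,\dots,F_{n-1}$, normalization of the constant via $F_n=x_n^{d_n}$, Stickelberger's eigenvalue theorem on the dense simple-zero locus, and extension by Zariski density --- is precisely the proof given in that reference. The only steps you compress are the verification that the leading-form ideal of $\langle f_1,\dots,f_{n-1}\rangle$ is exactly $\langle \bar F_1,\dots,\bar F_{n-1}\rangle$ (which requires the regular-sequence hypothesis you already invoke, not just containment) and the base case $\Res(F_1,\dots,F_{n-1},x_n)=\Res(\bar F_1,\dots,\bar F_{n-1})$ (which needs the irreducibility property (iii) and a degree count, not only properties (i) and (ii)); both are handled in the cited source.
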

Here, the above form of Poisson formula follows from \cite[Chapter 3, Theorem 3.4]{CLO},
which is different from the original one in \cite{Jou}.

\subsection{Hypergraphs}

A \emph{hypergraph} $H$ is a pair $(V, E)$, where $V$ is the set of vertices, and $E\subset \mathcal{P}(V)$ is the set of edges.
A hypergraph $H$ is called $k$-\emph{uniform} for an integer $k\ge 2$
if for each $\e\in E$, $|\e|=k$.
Clearly, a $2$-uniform hypergraph is just a classical simple graph.

\begin{Def}\cite{CD}
Let $H=(V, E)$ be a $k$-uniform hypergraph.
The (\emph{normalized}) \emph{adjacency tensor} $\mathcal{A}(H)=(a_{i_1\cdots i_k})_{i_1, \cdots, i_k\in V}$ is defined by
\begin{align*}
a_{i_1\cdots i_k}=\begin{cases}
\dfrac{1}{(k-1)!}, & \mbox{if~} \{i_1,\cdots, i_k\}\in E, \\
0, & {\rm otherwise}.
\end{cases}
\end{align*}
\end{Def}

For convenience, we use the following notation.
Let $V$ be a finite set and $m$ a positive integer.
For each $\textbf{e}=(i_1, \cdots, i_m)\in V^{m}$ and $\textbf{c}=(c_1, \cdots, c_m)\in \mathbb{N}^m$,  we denote
$\x_{\textbf{e}}^{\textbf{c}}=x_{i_1}^{c_1}\cdots x_{i_m}^{c_m}$. We also write $\x_{\textbf{e}}^{\mathbbm{1}}$ as $\x_\textbf{e}$, where
$\mathbbm{1}=(1, \cdots, 1)\in \mathbb{N}^m$. If $V=[n]$, $\textbf{c}=(c_1, \cdots, c_n) \in \mathbb{N}^n$, we write
$\x_{[n]}^{\textbf{c}}$ as $\x^{\textbf{c}}$.

The eigenvalues of a tensor was introduced by Qi \cite{Qi, Qi2} and Lim \cite{Lim} independently.
The adjacency tensor of a uniform hypergraph was introduced by Cooper and Dutle \cite{CD}.
Here we briefly give the definition of eigenvalues of uniform hypergraphs based on the above.

\begin{Def}\cite{Qi,CD}
Let $H=(V, E)$ be a $k$-uniform hypergraph and $\mathcal{A}=(a_{i_1\cdots i_k})$ be the adjacency tensor of $H$.
For some $\la\in \C$, if there exists a nonzero vector $\x\in \C^{|V|}$ such that for each $j \in V$,
\[\Sum_{i_2,i_3,\ldots,i_k \in V}a_{ji_2 i_3 \ldots i_k }x_{i_2}x_{i_3} \cdots x_{i_k}=\la x_j^{k-1},\]
or equivalently, for each $v\in V$,
\[\Sum_{v\in \e\in E} \x_{\e\backslash \{v\}}=\la x_v^{k-1},\]
then $\la$ is called an \emph{eigenvalue} of $H$.
\end{Def}

For each $v\in V$, define
\[F_v=\la x_v^{k-1}-\Sum_{v\in \e\in E} \x_{\e\backslash \{v\}}.\]
The polynomial
\[\phi_H(\la)=\Res(F_v\colon v\in V)\]
in the indeterminant $\la$ is called the \emph{characteristic polynomial} of $H$.
Consequently, $\la$ is an eigenvalue of $H$ if and only if $\phi_H(\la)=0$.

\subsection{Dollar game on graph}\label{Dollar game model}

Let $G=(V,E)$ be a simple graph.
Recall that a \emph{configuration} $\c$ on $G$ means a function $\c\colon V \to \N$, which can be understood
there is a pile of $\c(v)$ tokens (chips, or dollars) at each vertex $v$.
A dollar game on $G$ starts from a configuration $\c$.
At each step of the game, a vertex $v$ is \emph{fired}, that is, dollars move from $v$ to its adjacent vertices, one dollar going along each edge incident to $v$.
 Fix a vertex $w$ of $G$, called the \emph{bank vertex}.
 A vertex $v$ other than $w$ can be fired if and only if $\c(v)\ge \deg(v)$, where $\deg(v)$ is the degree of the vertex $v$.
The bank vertex $w$ is allowed to go into debt such that $w$ can be fired if and only if no other firing is possible.

Suppose that $\mathcal{X}$ is a non-empty finite sequence of (not necessarily distinct) vertices of $G$,
such that starting from a configuration $\c$, the vertices can be fired in the order of $\mathcal{X}$.
If $v$ occurs $x(v)$ times, we shall refer to $x$ as the representative vector for $\mathcal{X}$.
The configuration $\c'$ after the sequence of firing $\mathcal{X}$ is given by
\[\c'=\c-Lx\]
where $L$ is the Laplacian matrix of $G$.

The dollar game on graph was introduced by Biggs \cite{B},
  which is a variant of chip-firing game, and is often described in terms of ``snowfall" and ``avalanches" in the literature.
A configuration $\c$ is said to be \emph{stable} if $0\le \c(v)<\deg(v)$ for any $v\neq w$.
A sequence of firing is $w$-\emph{legal} if and only if each occurrence of a vertex $v\neq w$ follows a configuration
$t$ with $t(v)\ge \deg(v)$ and each occurrence of $w$ follows a stable configuration.
A configuration $\c$ on $G$ is said \emph{recurrent} if there is a $w$-legal sequence for $\c$ which leads to the same configuration.
A \emph{critical} configuration $\c$ means that $\c$ is both stable and recurrent. we refer to \cite{B} for more details.

\begin{lem}\cite[Theorem 6.2]{B} \label{critical conf}
If $G$ is a connected graph, then the number of critical configurations is equal to the number of
spanning trees of $G$.
\end{lem}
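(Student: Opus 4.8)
The plan is to set up a bijection between critical configurations and the elements of a finite abelian group attached to $G$, and then to count that group by the Matrix-Tree Theorem. Write $\tilde V=V\setminus\{w\}$ and let $\tilde L$ be the reduced Laplacian, obtained from $L$ by deleting the row and column indexed by the bank vertex $w$. When a non-bank vertex $v$ is fired, the restriction to $\tilde V$ of the configuration changes by $-\tilde L e_v$ (the chips sent to $w$ being absorbed by the bank), so I would declare two configurations on $\tilde V$ equivalent exactly when their difference lies in $\operatorname{im}(\tilde L)$ acting on $\mathbb{Z}^{\tilde V}$; this is precisely the relation generated by firing and unfiring non-bank vertices. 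The crux is then to prove that each coset of $\mathbb{Z}^{\tilde V}/\operatorname{im}(\tilde L)$ contains exactly one critical configuration.

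For existence I would first note that, since $G$ is connected and the bank vertex eventually absorbs chips, no $w$-legal firing sequence of non-bank vertices is infinite, so stabilization always terminates and, being the firing of a nonnegative vector, keeps a configuration in its coset. The efficient tool, and the heart of the matter, is Dhar's burning algorithm: a stable configuration $\c$ is recurrent precisely when, after firing $w$ once, the configuration restabilizes by firing every vertex of $\tilde V$ exactly once and returns to $\c$ --- that is, when the induced firing vector equals the all-ones vector $\mathbbm{1}$. Since the stabilization of any sufficiently large configuration is recurrent, and every coset admits a large nonnegative representative (a coset of a full-rank lattice meets every translate of a fundamental domain), each coset contains a critical configuration.

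The uniqueness is the step I expect to be the main obstacle. The structural input is the abelian, or least-action, property of chip-firing: both the stable configuration reached from a given start and the total firing vector producing it are independent of the order of firings. Combined with the burning criterion, this makes the passage to a critical representative well defined on each coset; the point that takes care is that if $\c$ and $\c'$ are both critical with $\c-\c'=\tilde L x$, then the stability bounds $0\le \c(v),\c'(v)<\deg(v)$ together with the least-action principle force the firing vector $x$ to vanish, whence $\c=\c'$.

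Once the bijection between critical configurations and $\mathbb{Z}^{\tilde V}/\operatorname{im}(\tilde L)$ is in hand, the number of critical configurations equals the order of this group, namely $\lvert\det\tilde L\rvert$. Finally, the Matrix-Tree (Kirchhoff) Theorem identifies $\det\tilde L$ with the number of spanning trees of the connected graph $G$, which completes the proof.
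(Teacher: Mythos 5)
The paper offers no proof of this lemma at all --- it is quoted verbatim from Biggs \cite{B} (his Theorem 6.2) --- so there is no in-paper argument to compare yours against. Your route is the standard one, and essentially the one taken in the cited source and its descendants: identify critical configurations with the cosets of $\operatorname{im}(\tilde L)$ in $\mathbb{Z}^{\tilde V}$, where $\tilde L$ is the reduced Laplacian, show each coset contains exactly one critical configuration, and count the quotient group by the Matrix--Tree theorem. The architecture (equivalence via $\tilde L$, existence via stabilization of a large nonnegative representative, and the observation that a full-rank sublattice meets every translate of a fundamental domain) is correct.

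The one step that does not hold up as written is uniqueness, and you have correctly located it as the crux. Your stated justification --- that the stability bounds $0\le \mathbf{c}(v),\mathbf{c}'(v)<\deg(v)$ together with the least-action principle force the connecting firing vector $x$ to vanish --- cannot be right as a derivation, because it never actually uses recurrence: a coset of $\operatorname{im}(\tilde L)$ generically contains many stable configurations (the quotient has order equal to the tree number, while the number of stable configurations is $\prod_v \deg(v)$, which is strictly larger for any graph with more than one spanning tree), so stability plus least action alone cannot single out one element per coset. Recurrence must enter the argument itself, not merely the hypotheses. The standard repair is exactly the tool you name but do not deploy: Dhar's burning criterion (a stable $\mathbf{c}$ is recurrent iff stabilizing $\mathbf{c}+\tilde L\mathbbm{1}$ fires every non-bank vertex exactly once and returns $\mathbf{c}$) is used to show that each coset contains \emph{at most} one recurrent configuration, for instance by showing that two equivalent recurrent configurations are both reached as the stabilization of one common large configuration and invoking the abelian property. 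Similarly, the assertion that the stabilization of any sufficiently large configuration is recurrent is true but is itself a lemma requiring the exhibition of a $w$-legal returning sequence. With those two lemmas supplied, the rest of your argument goes through and gives $\lvert\det\tilde L\rvert$ critical configurations, which Kirchhoff's theorem identifies with the number of spanning trees.
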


\begin{exm}\label{critical conf of completed graph}
Let $K_k$ be a completed graph on $k$ vertices. Then the number of critical configurations is $k^{k-2}$.
\end{exm}

\section{Poisson formula for characteristic polynomials of hypergraphs}
\def\c{\textbf{c}}
\subsection{Poisson formula for hypergraphs}

Let $H=(V, E)$ be a $k$-uniform hypergraph.
Recall that the characteristic polynomial of $H$ is defined as
\[\phi_H(\la)=\Res(F_v\colon v\in V),\]
where
\[F_v=\la x_v^{k-1}-\Sum_{v\in \e\in E} \x_{\e\backslash \{v\}}\in \C[x_v\colon v\in V].\]
In order to use Poisson formula for the resultant $\Res(F_v\colon v\in V)$, we need fix a vertex $w$ in $V$.
Denote by $E_w$ the set of all edges containing the vertex $w$ and $\e_{\widehat{w}}=\e\backslash \{w\}$ for each $\e\in E_w$.
Then we have
\begin{align}\label{F_v, v neq w}
f_w=&\la-\Sum_{\e\in E_w} \x_{\e_{\widehat{w}}}, \notag \\
f_v=& \la x_v^{k-1}-\Sum_{v\in \e\in E\backslash E_w}\x_{\e\backslash\{v\}}-\Sum_{v\in \e \in E_w} \x_{\e_{\widehat{w}}\backslash\{v\}}, \; v\in V\backslash \{w\},\notag  \\
\bar F_v=& \la x_v^{k-1}-\Sum_{v\in \e\in E\backslash E_w}\x_{\e\backslash\{v\}}, \; v\in V\backslash \{w\}.
\end{align}

Deleting the vertex $w$ in $V$ and the edges in $E_w$,
one can obtain a sub-hypergraph $\widehat{H}=(\widehat{V}, \widehat{E})$.
To be precise, $\widehat{V}=V\backslash \{w\}$ and $\widehat{E}=E\backslash E_w$.

\begin{lem}\label{lower dim formula}
 Retain the above notation. Then
\begin{align}\label{lower order}
\phi_H(\la)=\phi_{\widehat{H}}(\la)^{k-1}\det(m_{f_w}\colon A \to A),
\end{align}
where
$A$ is the quotient algebra $\dfrac{\C[x_v\colon v\in \widehat{V}]}{\lr{f_v\colon v\in \widehat{V}}}$
and $m_{f_w}$ is the multiplication map of $A$ given by $f_w$.
\end{lem}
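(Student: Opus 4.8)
My plan is to feed the defining system of $\phi_H(\la)$ into the Poisson formula (Lemma~\ref{lower number of eqs}), with the fixed vertex $w$ playing the role of the distinguished last coordinate. Dehomogenizing at $x_w=1$ produces $f_w$ from $F_w$ and $f_v$ from each $F_v$ with $v\neq w$, while setting $x_w=0$ produces the $\bar F_v$; these are exactly the polynomials recorded above. The first thing to settle is the degree bookkeeping: every $F_v$ is homogeneous of degree $k-1$, since the term $\la x_v^{k-1}$ and each monomial $\x_{\e\setminus\{v\}}$ (a product over the $k-1$ vertices of $\e$ other than $v$) both have degree $k-1$. In particular $d_w=k-1$, which is exactly the exponent appearing in \eqref{lower order}.

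The decisive observation is that the polynomials $\bar F_v$, $v\in\widehat{V}$, form the characteristic system of the deleted hypergraph $\widehat{H}=(\widehat{V},\widehat{E})$. Indeed, for $v\in\widehat{V}$ the edges of $\widehat{H}$ through $v$ are exactly the edges of $E\setminus E_w$ through $v$, so
\[
\bar F_v=\la x_v^{k-1}-\Sum_{v\in\e\in E\setminus E_w}\x_{\e\setminus\{v\}}
\]
is literally the polynomial attached to $v$ in the definition of $\phi_{\widehat{H}}$. Hence $\Res(\bar F_v\colon v\in\widehat{V})=\phi_{\widehat{H}}(\la)$, and substituting this into the Poisson formula turns its conclusion into $\phi_H(\la)=\phi_{\widehat{H}}(\la)^{k-1}\det(m_{f_w})$, which is the assertion.

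The one real obstacle is that Lemma~\ref{lower number of eqs} carries the hypothesis $\Res(\bar F_v\colon v\in\widehat{V})\neq 0$, which here reads $\phi_{\widehat{H}}(\la)\neq 0$ and fails at the finitely many eigenvalues of $\widehat{H}$; so the formula cannot be quoted verbatim at those scalars, and $A$ and $\det(m_{f_w})$ must first be given a meaning uniform in $\la$. I would resolve this by regarding $\la$ as an indeterminate and carrying out the construction over the field $\C(\la)$ (or its algebraic closure, over which resultants and the Poisson formula remain valid). The polynomial $\phi_{\widehat{H}}(\la)$ is not identically zero---for $|\la|$ exceeding the maximum degree of $\widehat{H}$ the system $\bar F_v=0$ has only the trivial solution, as one sees by comparing largest coordinates---so it is invertible in $\C(\la)$. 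Then the hypothesis holds over $\C(\la)$, the algebra $A$ becomes a $\C(\la)$-vector space of dimension $(k-1)^{|\widehat{V}|}$, the determinant $\det(m_{f_w})\in\C(\la)$ is well defined, and the Poisson formula yields \eqref{lower order} as an identity in $\C(\la)$. Since $\phi_H$ and $\phi_{\widehat{H}}$ lie in $\C[\la]$, this is the desired polynomial identity; equivalently, both sides of \eqref{lower order} agree at every scalar $\la$ with $\phi_{\widehat{H}}(\la)\neq 0$, a Zariski-dense set, which already forces it.
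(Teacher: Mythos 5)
Your proof is correct and follows essentially the same route as the paper: apply the Poisson formula with $w$ as the distinguished vertex and identify $\Res(\bar F_v\colon v\in\widehat{V})$ with $\phi_{\widehat{H}}(\la)$. You additionally address the hypothesis $\Res(\bar F_v\colon v\in\widehat{V})\neq 0$ by working over $\C(\la)$, a point the paper's one-line proof leaves implicit; this is a welcome refinement rather than a different method.
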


\begin{proof}
By Lemma \ref{lower number of eqs}, the characteristic polynomial of $H$ is
\[\phi_H(\la)=\Res(\bar F_v\colon v\in \widehat{V})^{k-1}\det(m_{f_w}\colon A \to A).\]
Considering the subhypergraph $\widehat{H}$ of $H$, by Equation \eqref{F_v, v neq w},
we have
\[\phi_{\widehat{H}}(\la)=\Res(\bar F_v \colon v\in \widehat{V}).\]
The result follows.
\end{proof}

By definition, the algebra $A$ is $(k-1)^{r-1}$-dimensional as a vector space over $\C$
where $r$ is the number of vertices of $H$.
In general, it is difficult to compute the determinant $\det(m_{f_w}\colon A \to A)$.
However, we can give some description for some special cases.

\subsection{Hypergraphs with a cut vertex}

Let $H=(V, E)$ be a $k$-uniform connected hypergraph and $w\in V$.
Denote by $E_{\widehat{w}}=\{\e_{\widehat{w}}\mid \e\in E_w\}$.
Deleting the vertex $w$, we can get a (non-uniform) hypergraph $\widetilde{H}=(\widetilde{V}, \widetilde{E})$,
with $\widetilde{V}=\widehat{V}=V\backslash \{w\}$ and $\widetilde{E}=(E\backslash E_w)\cup E_{\widehat{w}}$.
Recall the vertex $w$ is called a \emph{cut vertex} if $\widetilde{H}$ is not connected; see Fig. \ref{cutvertex}.
Suppose that $w$ is a cut vertex and $\widetilde{H}_1=(\widetilde{V}_1, \widetilde{E}_1), \cdots, \widetilde{H}_n=(\widetilde{V}_n, \widetilde{E}_n)$ ($n\ge 2$) are
the connected components of $\widetilde{H}$.
For each $i\in [n]$, we set $V_i=\widetilde{V}_i$, $E_i=\widetilde{E}_i\backslash E_{\widehat{w}}$,
and then obtain a subhypergraph $H_i=(V_i, E_i)$ of $H$.
Note that each $H_i$ is a $k$-uniform hypergraph and may not be connected.

\begin{figure}[H]
\centering
\ifpdf
  \setlength{\unitlength}{1bp}%
  \begin{picture}(163.13, 90.66)(0,0)
  \put(0,0){\includegraphics{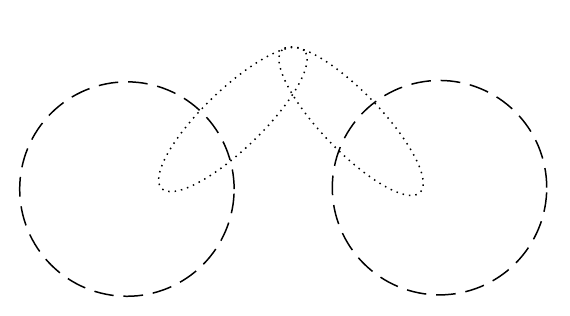}}
  \put(81.43,70.20){\fontsize{7.11}{8.54}\selectfont $\bullet$}
  \put(27.53,31.31){\fontsize{7.11}{8.54}\selectfont $H_1$}
  \put(122.03,28.16){\fontsize{7.11}{8.54}\selectfont $H_n$}
  \put(76.01,29.21){\fontsize{7.11}{8.54}\selectfont $\cdots$}
  \put(83.53,79.44){\fontsize{7.11}{8.54}\selectfont \makebox[0pt]{$w$}}
  \put(79.51,11.19){\fontsize{7.11}{8.54}\selectfont \makebox[0pt]{$H$}}
  \put(71.81,58.96){\fontsize{7.11}{8.54}\selectfont \makebox[0pt]{$E_w^1$}}
  \put(97.18,58.61){\fontsize{7.11}{8.54}\selectfont \makebox[0pt]{$E_w^n$}}
  \end{picture}
  \caption{A $k$-uniform hypergarph $H$ with a cut vertex $w$}\label{cutvertex}
\end{figure}

For each $i\in [n]$, we denote
\[E_w^i=\{\e\in E_w \mid \e\cap V_i\neq \varnothing \}.\]
By definition, we have
\[\phi_H(\la)=\Res(F_v\colon v\in V),\]
where
\begin{align*}
F_{v_i}=&\la x_{v_i}^{k-1}-\Sum_{v_i\in \e\in E_i} \x_{\e\backslash \{v_i\}}-\Sum_{v_i\in\e\in E_w^i} \x_{ \e\backslash \{v_i\}}, \; v_i\in V_i, i \in [n],\\
F_w=& \la x_w^{k-1}-\Sum_{i=1}^n\Sum_{\e\in E_w^i} \x_{\e_{\widehat{w}}},
\end{align*}
Therefore,
\begin{align*}
f_w=& \la -\Sum_{i=1}^n\Sum_{\e\in E_w^i} \x_{\e_{\widehat{w}}},\\
f_{v_i}=&\la x_{v_i}^{k-1}-\Sum_{v_i\in \e\in E_i} \x_{\e\backslash \{v_i\}}-\Sum_{v_i\in \e\in E_w^i} \x_{\e_{\widehat{w}} \backslash \{v_i\}}, \; v_i\in V_i,i \in [n],\\
\bar F_{v_i}=&\la x_{v_i}^{k-1}-\Sum_{v_i\in \e\in E_i} \x_{\e\backslash \{v_i\}}, \; v_i\in V_i, i \in [n].
\end{align*}

Let $A$ be the quotient algebra $\dfrac{\C[x_v\colon v\in \widehat{V}]}{\lr{f_v\colon v\in \widehat{V}}}$, and $m_{f_w}\colon A \to A$ is the linear map given by multiplication by $f_w$.
Since $V_1, \cdots, V_n$ form a partition of $\widehat{V}$ and for each $v_i\in V_i$, $f_{v_i}\in \C[x_v\colon v\in V_i]$,
we have $A=A_1 \otimes \cdots \otimes A_n$, where
\[A_i= \dfrac{\C[x_v\colon v\in V_i]}{\lr{f_v\colon v\in V_i}}, \; i \in [n].\]
We define and denote $m_{i, w}\colon A_i \to A_i$ the linear map given by the multiplication
  by $\Sum_{\e\in E_w^i} \x_{\e_{\widehat{w}}}$ for each $i\in [n]$.
Then
\[m_{f_w}=\la \Id_{A} -\Sum_{i=1}^n \Id_{A_1}\otimes \cdots \Id_{A_{i-1}}\otimes m_{i,w} \ot \Id_{A_{i+1}}\otimes \cdots \Id_{A_n},\]
where $\Id$ denotes the identity map on certain vector space.

\textbf{Assumption 1}.
\emph{For each $i\in [n]$, there exists an ordered $\C$-basis $\x^{\al_{i, 1}}, \cdots, \x^{\al_{i, d_i}}$ for $A_i$ such that
the matrix of $m_{i, w}$ with repect to this basis is a lower triangular matrix with the diagonal entry $\alpha_{i, j_i}$, $ j_i \in [d_i]$, where $d_i=(k-1)^{r_i}$ and $r_i=|V_i|$.}

Under the Assumption 1, $\{\x^{\al_{1, j_1}}\cdots \x^{\al_{n, j_n}}\mid j_i\in [d_i], i\in [n]\}$
with the left lexicographic order is a basis for $A$ such that the matrix of $m_{f_w}$ with respect to this basis
is still a lower triangular matrix with diagonal entries
$\la-\Sum_{1\le i\le n}\alpha_{i, j_i}$ for $j_i \in [d_i]$ and $i \in [n]$.
In this situation, we have
\begin{align}\label{det for cut vertex}
\det(m_{f_w}\colon A \to A)=\prod\limits_{1\le j_i\le d_i\atop 1\le i\le n} (\la-\Sum_{i=1}^n \alpha_{i, j_i})
\end{align}

\begin{cor}\label{charcut}
Let $H$ be a $k$-uniform hypergraph with a cut vertex $w$. Then, under the Assumption 1,
\begin{align*}
\phi_H(\la)=\prod\limits_{i=1}^n\phi_{H_i}(\la)^{(k-1)^{1+\Sum_{j\neq i}r_j}}
\prod\limits_{1\le j_i\le d_i\atop 1\le i\le n} (\la-\Sum_{i=1}^n \alpha_{i, j_i}).
\end{align*}
\end{cor}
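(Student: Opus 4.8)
The plan is to reduce everything to Lemma \ref{lower dim formula}, which already expresses $\phi_H(\la)$ as $\phi_{\widehat H}(\la)^{k-1}\det(m_{f_w}\colon A\to A)$, and then to evaluate the two factors separately. The determinant factor is already settled: under Assumption 1 the computation leading to Equation \eqref{det for cut vertex} shows $\det(m_{f_w})=\prod_{1\le j_i\le d_i,\,1\le i\le n}(\la-\sum_{i=1}^n\alpha_{i,j_i})$. So the only thing left to do is to identify the factor $\phi_{\widehat H}(\la)^{k-1}$ with $\prod_{i=1}^n\phi_{H_i}(\la)^{(k-1)^{1+\sum_{j\neq i}r_j}}$.

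The key structural observation is that deleting $w$ together with all edges of $E_w$ leaves precisely the disjoint union of the components $H_1,\dots,H_n$. Indeed $\widehat V=V\setminus\{w\}=V_1\cup\cdots\cup V_n$ and $\widehat E=E\setminus E_w=E_1\cup\cdots\cup E_n$, and since no edge of $\widehat E$ meets two distinct $V_i$, the polynomials $\bar F_v$ ($v\in\widehat V$) split into $n$ blocks, the $i$-th block $\{\bar F_{v}\colon v\in V_i\}$ involving only the variables $\{x_v\colon v\in V_i\}$. These blocks use pairwise disjoint variable sets, and by construction $\Res(\bar F_v\colon v\in V_i)$ is exactly the characteristic polynomial $\phi_{H_i}(\la)$ of the subhypergraph $H_i$.

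Next I would apply the multiplicativity Lemma \ref{Res(FG)=ResFResG} to these $n$ disjoint blocks. Each $\bar F_v$ is homogeneous of degree $k-1$, so the product of the degrees within block $i$ equals $(k-1)^{r_i}$. Iterating the two-block identity over the $n$ blocks gives $\Res(\bar F_v\colon v\in\widehat V)=\prod_{i=1}^n\phi_{H_i}(\la)^{\prod_{j\neq i}(k-1)^{r_j}}$, where the exponent $\prod_{j\neq i}(k-1)^{r_j}=(k-1)^{\sum_{j\neq i}r_j}$. Since $\phi_{\widehat H}(\la)=\Res(\bar F_v\colon v\in\widehat V)$, raising to the $(k-1)$-th power and combining exponents (using $(k-1)\cdot(k-1)^{\sum_{j\neq i}r_j}=(k-1)^{1+\sum_{j\neq i}r_j}$) yields the asserted product. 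Substituting this together with Equation \eqref{det for cut vertex} into Lemma \ref{lower dim formula} finishes the proof.

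The computation is essentially routine given the preceding lemmas; the only point requiring genuine care is the exponent bookkeeping in the iterated application of Lemma \ref{Res(FG)=ResFResG}—one must track that every polynomial has degree $k-1$ and that the induction over blocks produces the symmetric exponent $(k-1)^{\sum_{j\neq i}r_j}$ rather than something unbalanced. I expect this to be the main (if modest) obstacle, together with verifying cleanly that $\widehat H$ decomposes as the disjoint union $\bigsqcup_i H_i$, so that the block structure of the $\bar F_v$ is legitimate and the constituent resultants are genuinely the $\phi_{H_i}(\la)$.
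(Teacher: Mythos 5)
Your proposal is correct and follows essentially the same route as the paper: split $\Res(\bar F_v\colon v\in\widehat V)$ over the disjoint components via Lemma \ref{Res(FG)=ResFResG} to get $\prod_i\phi_{H_i}(\la)^{(k-1)^{\Sum_{j\neq i}r_j}}$, take the $(k-1)$-th power from Lemma \ref{lower dim formula}, and insert the determinant from Equation \eqref{det for cut vertex}. The exponent bookkeeping you flag as the main point of care is handled identically in the paper.
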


\begin{proof} By definition, the characteristic polynomial of $H_i=(V_i, E_i)$ is
\[\phi_{H_i}(\la)=\Res(\bar{F}_{v_i} \colon v_i\in V_i).\]
By Lemma \ref{Res(FG)=ResFResG}, we have
\begin{align*}
\Res(\bar F_v\colon v\in \widehat{V})= \prod\limits_{i=1}^n \Res(\bar F_{v_i}\colon v_i\in V_i)^{(k-1)^{\Sum_{j\neq i} |V_j|}}
= \prod\limits_{i=1}^n \phi_{H_i}(\la)^{(k-1)^{\Sum_{j\neq i} r_j}}
\end{align*}
From Theorem \ref{lower dim formula}, it follows that
\begin{align*}
\phi_H(\la)=&\Res(\bar{F}_v\colon v\in \hat{V})^{k-1} \det(m_{f_w}\colon A \to A)\\
=&\prod\limits_{i=1}^n\phi_{H_i}(\la)^{(k-1)^{1+\Sum_{j\neq i}r_j}}\prod\limits_{j_i \in [d_i]\atop i \in [n]} (\la-\Sum_{i=1}^n \alpha_{i, j_i}).
\end{align*}
\end{proof}

\subsection{Hypergraphs with a cored vertex}\label{hg with a pendant vertex}

Let $H=(V, E)$ be a $k$-uniform hypergraph.
Recall that a vertex $w \in V$ is called a \emph{cored vertex} if it is contained in only one edge; see Fig. \ref{cored}.
Deleting the cored vertex $w$ and the edge $\e_w$ containing $w$,
one can obtain a sub-hypergraph $\widehat{H}=(\widehat{V}, \widehat{E})$ with $\widehat{V}=V\backslash \{w\}$
and $\widehat{E}=E\backslash \{\e_w\}$.
Then
\begin{align*}
  F_w & = \la x_w^{k-1}-\x_{\e_{\widehat{w}}}, \\
 F_v &=\la x_v^{k-1}-\Sum_{v\in \e\in \widehat{E}} \x_{\e\backslash \{v\}}-\Sum_{v\in \e_w} \x_{\e_w\backslash \{v\}}, \; v\neq w.
\end{align*}
Moreover,
\begin{align*}
   f_w & = \la -\x_{\e_{\widehat{w}}},\\
   f_v &=\la x_v^{k-1}-\Sum_{v\in \e\in \widehat{E}} \x_{\e\backslash \{v\}}-\Sum_{v\in \e_w} \x_{\e_{\widehat{w}}\backslash \{v\}},\; v\neq w,\\
 \bar F_v &= \la x_v^{k-1}-\Sum_{v\in \e\in \widehat{E}} \x_{\e\backslash \{v\}}, \; v\neq w.
\end{align*}

\begin{figure}[H]
\centering
\ifpdf
  \setlength{\unitlength}{1bp}%
  \begin{picture}(118.79, 91.84)(0,0)
  \put(0,0){\includegraphics{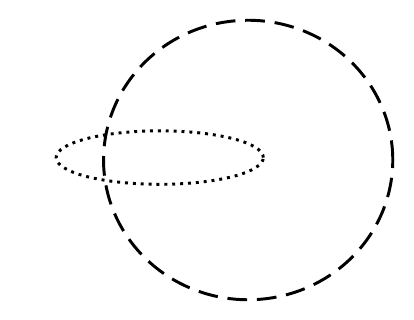}}
  \put(36.12,44.52){\fontsize{7.11}{8.54}\selectfont $\bullet$}
  \put(5.67,45.04){$w$}
  \put(43.64,44.52){$\cdots$}
  \put(58.59,44.52){\fontsize{7.11}{8.54}\selectfont $\bullet$}
  \put(20.19,44.52){\fontsize{7.11}{8.54}\selectfont $\bullet$}
  \put(75.49,58.52){$\widehat{H}$}
  \put(36.29,57.59){$\textbf{e}_w$}
  \end{picture}%
\caption{A hypergraph $H$ with a cored vertex $w$}\label{cored}
\end{figure}

\begin{cor}
Let $H$ be a $k$-hypergraph with a cored vertex $w$.
Retain the above notation.
Then
\[\phi_H(\la)=\phi_{\widehat{H}}(\la)^{k-1} \det(m_{f_w}\colon A\to A),\]
where $A$ is the quotient algebra $\dfrac{\C[x_v\colon v\in \widehat{V}]}{\lr{f_v\colon v\in \widehat{V}}}$ and $m_{f_w}$
is the multiplication map of $A$ given by $f_w$.
\end{cor}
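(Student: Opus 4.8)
The plan is to read this statement off from Lemma \ref{lower dim formula} as a pure specialization, since a cored vertex is nothing but a vertex for which the general deletion data simplify. In Lemma \ref{lower dim formula}, starting from an arbitrary vertex $w$ one forms $\widehat{H}=(\widehat{V},\widehat{E})$ by removing $w$ together with the set $E_w$ of \emph{all} edges through $w$, and obtains $\phi_H(\la)=\phi_{\widehat{H}}(\la)^{k-1}\det(m_{f_w}\colon A\to A)$ with $A=\C[x_v\colon v\in\widehat{V}]/\lr{f_v\colon v\in\widehat{V}}$. So the task is only to check that a cored vertex fits this framework verbatim.

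First I would invoke the defining hypothesis that $w$ is cored: it lies in exactly one edge $\e_w$, hence $E_w=\{\e_w\}$. Consequently the deletion prescribed by Lemma \ref{lower dim formula} strips off only $w$ and $\e_w$, producing precisely $\widehat{V}=V\backslash\{w\}$ and $\widehat{E}=E\backslash\{\e_w\}$, the subhypergraph named in this subsection. Next I would verify that the three families $f_w,f_v,\bar F_v$ displayed above are exactly the images of the generic formulas \eqref{F_v, v neq w} once the sum $\Sum_{\e\in E_w}$ collapses to its single term indexed by $\e_w$: indeed $f_w=\la-\x_{\e_{\widehat{w}}}$, while $\bar F_v$ carries no $\e_w$-contribution because $w\notin\widehat{V}$. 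Thus the quotient algebra $A$ and the multiplication operator $m_{f_w}$ coincide on the nose with those of Lemma \ref{lower dim formula}, and by the definition of the characteristic polynomial one has $\Res(\bar F_v\colon v\in\widehat{V})=\phi_{\widehat{H}}(\la)$.

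Quoting Lemma \ref{lower dim formula} then yields the displayed identity. There is no genuine obstacle here: the statement is a corollary whose only content is the combinatorial observation $E_w=\{\e_w\}$, together with the bookkeeping that identifies the specialized polynomials with the generic ones. As usual the equality is to be understood as a polynomial identity in $\la$, valid because both sides are polynomials agreeing for the generic values of $\la$ at which $\phi_{\widehat{H}}(\la)\neq 0$, so that the nonvanishing hypothesis of the underlying Poisson formula (Lemma \ref{lower number of eqs}) is satisfied. The genuinely substantive step, namely the explicit evaluation of $\det(m_{f_w}\colon A\to A)$ for a cored vertex through the dollar-game machinery, lies beyond this statement and is presumably taken up afterward.
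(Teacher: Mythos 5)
Your proposal is correct and matches the paper's treatment: the paper states this corollary without a separate proof precisely because it is the verbatim specialization of Lemma \ref{lower dim formula} to the case $E_w=\{\e_w\}$, which is exactly the reduction you carry out. Nothing further is needed.
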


\section{Hyperpaths}
\def\W{\omega}

Let $P_n^k$ be a $k$-uniform \emph{hyperpath} with $n$ edges or of length $n$, which has vertices labelled as $0, 1, \cdots, r=n(k-1)$ from left to right as in Fig. \ref{path},
and edges $\e_t=\{(k-1)(t-1), (k-1)(t-1)+1, \cdots, (k-1)t\}$ for $ t \in [n]$.

\begin{figure}[H]
\centering
\ifpdf
  \setlength{\unitlength}{1bp}%
  \begin{picture}(237.71, 45.55)(0,0)
  \put(0,0){\includegraphics{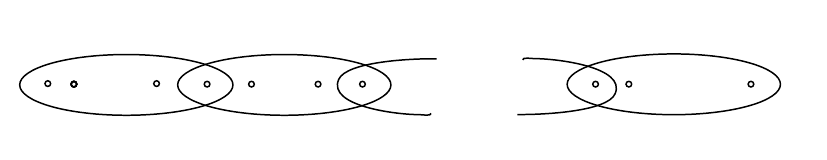}}
  \put(10.92,7.20){\fontsize{7.11}{8.54}\selectfont 0}
  \put(19.49,7.72){\fontsize{7.11}{8.54}\selectfont 1}
  \put(52.57,7.38){\fontsize{7.11}{8.54}\selectfont $k-1$}
  \put(96.49,7.38){\fontsize{7.11}{8.54}\selectfont $2(k-1)$}
  \put(203.07,7.55){\fontsize{7.11}{8.54}\selectfont $n(k-1)$}
  \put(26.84,19.62){\fontsize{7.11}{8.54}\selectfont $\cdots$}
  \put(76.54,19.80){\fontsize{7.11}{8.54}\selectfont $\cdots$}
  \put(189.59,19.45){\fontsize{7.11}{8.54}\selectfont $\cdots$}
  \put(76.54,19.80){\fontsize{7.11}{8.54}\selectfont $\cdots$}
  \put(35.42,34.32){\fontsize{7.11}{8.54}\selectfont $\textbf{e}_1$}
  \put(78.99,34.32){\fontsize{7.11}{8.54}\selectfont $\textbf{e}_2$}
  \put(189.42,34.32){\fontsize{7.11}{8.54}\selectfont $\textbf{e}_n$}
  \put(133.59,19.45){\fontsize{7.11}{8.54}\selectfont $\cdots$}
  \end{picture}%
\caption{A $k$-uniform hyperpath $P_n^k$ with $n$ edges}\label{path}
\end{figure}

In this section, we will give a recursive formula for the characteristic polynomials of hyperpaths.
By the Poisson formula introduced in Theorem \ref{lower dim formula}, it suffices to compute the related determinant.
For this, we need introduce the following model of dollar game on hypergraphs.

\subsection{Dollar game on hypergraph and firing graph}
We now define a dollar game on a hypergraph, considered as a variant of dollar game on a graph.
Let $H=(V, E)$ be a $k$-uniform hypergraph with a specified bank vertex $w$.
A function $\c\colon V \to \mathbb{N}$ is called a \emph{configuration on} $H$.
A dollar game starts from a configuration $\c$.
At each of step of the game, a vertex $v$ is \emph{fired}, that is, the vertex $v$ decreases $k-1$ dollars, and each of its adjacent vertices increases $1$ dollar.
A vertex $v$ other than $w$ can be fired if and only if $\c(v)\ge k-1$, and the bank vertex $w$ is allowed to go into debt such that
$w$ can be fired if and only if no other firing is possible.
We say that a configuration $\c$ is \emph{stable} if $0 \le \c(v)<k-1$ for any $v \ne w$.
The notions of $w$-legal sequence of firing, recurrent or critical configurations are same as those defined in Section 2.3.
The above setting of dollar game on hypergraphs is different from that of dollar game on simple graphs in Section 2.3, but will be useful for our discussion.

Let ``$\le $" be a total ordering on the set $\widehat{V}=V\backslash\{w\}$.
Let $\c$ be a configuration on $H$.
The \emph{weight} of $\c$ is defined and denoted to be $\W(\c)=\sum_{v\in \widehat V} \c(v)$.
%
We define the \emph{left anti-lexicographical order} $\prec$ for all of configurations on $H$.
To be precise, for any configurations $\c$ and $\c'$, $\c\prec \c' $ if and only if
either $\W(\c)<\W(\c')$, or $\W(\c)=\W(\c')$, $\c(i)=\c'(i)$ for any $1\le i\le t-1$ and $\c(t)>\c'(t)$ for some $t$.

Suppose further that the bank vertex $w$ is also a cored vertex of $H$.
Based on the above discussion, we now define a directed graph, called a \emph{firing graph} $\G(\c_0)$ associated with a stable configuration $\c_0$ on the hypergraph $H$,
  which is closely related the dollar game on $H$ starting from $\c_0$.
Here, a vertex of $\G(\c_0)$ is a configuration and a directed edge will be called an \emph{arrow}.


Step 1. Initially, set $\V_0=\{\c_0\}$ and $\E_0=\varnothing$.

Step 2. Let $\bar\c_0$ be a configuration given by
\[\bar\c_0(v)=\begin{cases}
\c_0(v)+1, & \mbox{if~}v\in \e_w\backslash \{w\}, \\
\c_0(v), & \mbox{otherwise},
\end{cases}\]
where $\e_w$ is the unique edge of $H$ containing $w$.
Put $\V_1=\V_0\cup \{\bar\c_0\}$ and $\E_1=\{\c_0\xrightarrow{\al_{w, \e_w}}\bar \c_0\}$, where the arrow in $\E_1$ means
that the configuration $\bar\c_0$ is obtained from $\c_0$ by firing the bank vertex $w$ on the edge $\e_w$.

Step 3. If all configuration $\V_i\backslash \V_{i-1}$ are stable,
then we define $\V(\G(\c_0))=\V_i$ and $\E(\G(\c_0))=\E_i$.
Otherwise, for each non-stable configuration $\c\in \V_i\backslash \V_{i-1}$,
we choose the vertex $u_{\c}:=\max\{v\in \widehat{V} \mid \c(v)\ge k-1\}$. For each edge $\e\in E$ containing $u_{\c}$, we define
an arrow $\c \xrightarrow{\al_{u_{\c}, \e}} \bar\c_{\e}$, where the configuration $\bar\c_\e$ is given by
\[\bar\c_\e(v)=
\begin{cases}
\c(v)-(k-1), & \mbox{if~}v=u_{\c}, \\
\c(v)+1, & \mbox{if~} v\in \e\backslash \{u_{\c}\}, \\
\c(v), & \textrm{otherwise},
\end{cases}\]
and
\begin{align*}
\V_{i+1}&=\V_i\cup \{\bar\c_\e\mid u_{\c} \in \e\in E, \c\ \mbox{is not stable in}\ \V_i\backslash \V_{i-1}\},\\
\E_{i+1}&=\E_i\cup \{\c \xrightarrow{\al_{u_{\c}, \e}} \bar\c_{\e} \mid u_{\c}\in \e \in E, \c\ \mbox{is not stable in }\ \V_i\backslash \V_{i-1}\}.
\end{align*}
Note that $\bar\c_\e$ may have been in $\V_i$. If $\bar\c_\e \notin \V_i$, we say that $\bar\c_\e$ is obtained from $\c$ by firing $u_{\c}$ on $\e$.

The Step 3 tells us the \emph{firing rule}, that is, which vertex will be fired at next step among all non-stable vertices other than the bank vertex.
In addition, from the construction of $\G(\c_0)$, we have $0\le \W(\c) \le \W(\c_0)+k-1$ for any $\c\in \V(\G(\c_0))$,
which forces that $\G(\c_0)$ is a finite directed graph.

\begin{exm}\label{P33}
Let $P^3_3$ be the $3$-uniform hyperpath with $3$ edges as in Fig. \ref{path} by taking $n=3$ and $k=3$.
Let $0$ be the bank vertex of $P^3_3$, and let $\c_0=(*,1,1,1,1,0,0)$.
Here, the value of the bank vertex $0$ is omitted in each configuration of the dollar game starting from $\c_0$.
The firing graph $\G(\c_0)$ is drawn in Fig. \ref{P33}, where each vertex within a circle means that it will be fired at the next step.

\begin{figure}[H]
\centering
\ifpdf
  \setlength{\unitlength}{1bp}%
  \begin{picture}(330.31, 93.99)(0,0)
  \put(0,0){\includegraphics{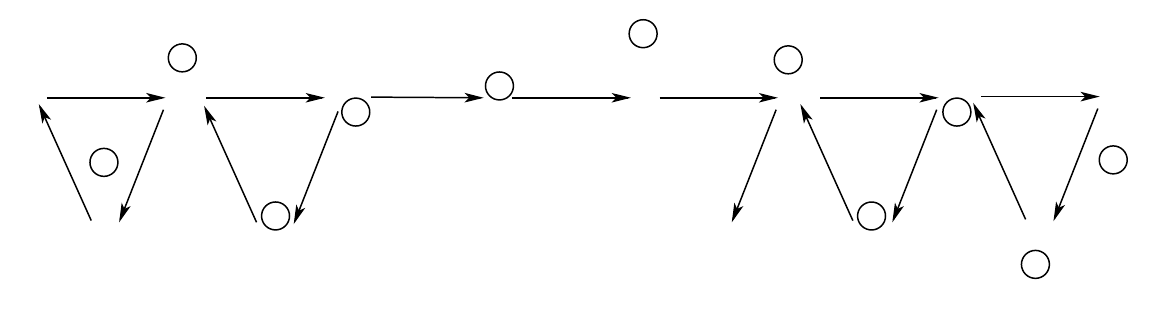}}
  \put(5.67,82.41){\fontsize{7.11}{8.54}\selectfont 1}
  \put(5.67,74.89){\fontsize{7.11}{8.54}\selectfont 1}
  \put(5.67,67.36){\fontsize{7.11}{8.54}\selectfont 1}
  \put(5.67,59.84){\fontsize{7.11}{8.54}\selectfont 1}
  \put(5.67,53.07){\fontsize{7.11}{8.54}\selectfont 0}
  \put(5.67,45.55){\fontsize{7.11}{8.54}\selectfont 0}
  \put(50.81,82.41){\fontsize{7.11}{8.54}\selectfont 2}
  \put(50.81,74.89){\fontsize{7.11}{8.54}\selectfont 2}
  \put(50.81,67.36){\fontsize{7.11}{8.54}\selectfont 1}
  \put(50.81,59.84){\fontsize{7.11}{8.54}\selectfont 1}
  \put(50.81,53.07){\fontsize{7.11}{8.54}\selectfont 0}
  \put(50.81,45.55){\fontsize{7.11}{8.54}\selectfont 0}
  \put(28.24,44.79){\fontsize{7.11}{8.54}\selectfont 3}
  \put(28.24,37.27){\fontsize{7.11}{8.54}\selectfont 0}
  \put(28.24,29.75){\fontsize{7.11}{8.54}\selectfont 1}
  \put(28.24,22.23){\fontsize{7.11}{8.54}\selectfont 1}
  \put(28.24,15.46){\fontsize{7.11}{8.54}\selectfont 0}
  \put(28.24,7.93){\fontsize{7.11}{8.54}\selectfont 0}
  \put(100.77,81.85){\fontsize{7.11}{8.54}\selectfont 2}
  \put(100.77,74.32){\fontsize{7.11}{8.54}\selectfont 0}
  \put(100.77,66.80){\fontsize{7.11}{8.54}\selectfont 2}
  \put(100.77,59.28){\fontsize{7.11}{8.54}\selectfont 2}
  \put(100.77,52.51){\fontsize{7.11}{8.54}\selectfont 0}
  \put(100.77,44.98){\fontsize{7.11}{8.54}\selectfont 0}
  \put(77.08,44.61){\fontsize{7.11}{8.54}\selectfont 2}
  \put(77.08,37.08){\fontsize{7.11}{8.54}\selectfont 1}
  \put(77.08,29.56){\fontsize{7.11}{8.54}\selectfont 3}
  \put(77.08,22.04){\fontsize{7.11}{8.54}\selectfont 0}
  \put(77.08,15.27){\fontsize{7.11}{8.54}\selectfont 0}
  \put(77.08,7.74){\fontsize{7.11}{8.54}\selectfont 0}
  \put(142.15,81.85){\fontsize{7.11}{8.54}\selectfont 2}
  \put(142.15,74.32){\fontsize{7.11}{8.54}\selectfont 0}
  \put(142.15,66.80){\fontsize{7.11}{8.54}\selectfont 2}
  \put(142.15,59.28){\fontsize{7.11}{8.54}\selectfont 0}
  \put(142.15,52.51){\fontsize{7.11}{8.54}\selectfont 1}
  \put(142.15,44.98){\fontsize{7.11}{8.54}\selectfont 1}
  \put(183.53,81.85){\fontsize{7.11}{8.54}\selectfont 2}
  \put(183.53,74.32){\fontsize{7.11}{8.54}\selectfont 1}
  \put(183.53,66.80){\fontsize{7.11}{8.54}\selectfont 0}
  \put(183.53,59.28){\fontsize{7.11}{8.54}\selectfont 1}
  \put(183.53,52.51){\fontsize{7.11}{8.54}\selectfont 1}
  \put(183.53,44.98){\fontsize{7.11}{8.54}\selectfont 1}
  \put(225.31,81.85){\fontsize{7.11}{8.54}\selectfont 0}
  \put(225.31,74.32){\fontsize{7.11}{8.54}\selectfont 2}
  \put(225.31,66.80){\fontsize{7.11}{8.54}\selectfont 0}
  \put(225.31,59.28){\fontsize{7.11}{8.54}\selectfont 1}
  \put(225.31,52.51){\fontsize{7.11}{8.54}\selectfont 1}
  \put(225.31,44.98){\fontsize{7.11}{8.54}\selectfont 1}
  \put(204.92,44.06){\fontsize{7.11}{8.54}\selectfont 1}
  \put(204.92,36.54){\fontsize{7.11}{8.54}\selectfont 0}
  \put(204.92,29.02){\fontsize{7.11}{8.54}\selectfont 0}
  \put(204.92,21.49){\fontsize{7.11}{8.54}\selectfont 1}
  \put(204.92,14.72){\fontsize{7.11}{8.54}\selectfont 1}
  \put(204.92,7.20){\fontsize{7.11}{8.54}\selectfont 1}
  \put(273.90,81.85){\fontsize{7.11}{8.54}\selectfont 0}
  \put(273.90,74.32){\fontsize{7.11}{8.54}\selectfont 0}
  \put(273.90,66.80){\fontsize{7.11}{8.54}\selectfont 1}
  \put(273.90,59.28){\fontsize{7.11}{8.54}\selectfont 2}
  \put(273.90,52.51){\fontsize{7.11}{8.54}\selectfont 1}
  \put(273.90,44.98){\fontsize{7.11}{8.54}\selectfont 1}
  \put(249.31,44.40){\fontsize{7.11}{8.54}\selectfont 0}
  \put(249.31,36.88){\fontsize{7.11}{8.54}\selectfont 1}
  \put(249.31,29.35){\fontsize{7.11}{8.54}\selectfont 2}
  \put(249.31,21.83){\fontsize{7.11}{8.54}\selectfont 0}
  \put(249.31,15.06){\fontsize{7.11}{8.54}\selectfont 1}
  \put(249.31,7.54){\fontsize{7.11}{8.54}\selectfont 1}
  \put(319.29,82.18){\fontsize{7.11}{8.54}\selectfont 0}
  \put(319.29,74.66){\fontsize{7.11}{8.54}\selectfont 0}
  \put(319.29,67.14){\fontsize{7.11}{8.54}\selectfont 1}
  \put(319.29,59.61){\fontsize{7.11}{8.54}\selectfont 0}
  \put(319.29,52.84){\fontsize{7.11}{8.54}\selectfont 2}
  \put(319.29,45.32){\fontsize{7.11}{8.54}\selectfont 2}
  \put(296.73,44.57){\fontsize{7.11}{8.54}\selectfont 0}
  \put(296.73,37.04){\fontsize{7.11}{8.54}\selectfont 0}
  \put(296.73,29.52){\fontsize{7.11}{8.54}\selectfont 1}
  \put(296.73,23.51){\fontsize{7.11}{8.54}\selectfont 1}
  \put(296.73,15.23){\fontsize{7.11}{8.54}\selectfont 3}
  \put(296.73,7.70){\fontsize{7.11}{8.54}\selectfont 0}
  \end{picture}%
\caption{The firing graph of $P_3^3$ associated with $\c_0$}\label{P33}
\end{figure}
\end{exm}

Generally speaking, it is difficult to obtain the firing graphs $\G(\c_0)$ for all stable configurations $\c_0$ on a general hypergraph.
However, for some special classes of hypergraphs, e.g. hyperpaths,
we can characterize the structure of $\G(\c_0)$.

\begin{exm}
Let $P_1^k$ be the $k$-uniform hyperpath on vertices $0, 1, \cdots, k-1$ with a single edge, and let $0$ be the bank vertex.
To determine the firing graph associated to a given configuration $\c_0$,
it is equivalent to consider the dollar game on the completed graph $K_k$ with the bank vertex $0$.
To be precise, a configuration $\c_0$ is a critical stable configuration if and only if the firing graph $\G(\c_0)$ is
is a directed cycle of length $k$,
and $\c_0$ is a non-critical stable configuration if and only if $\G(\c_0)$ is a directed path of length less than $k-1$.
\end{exm}

\subsection{Firing graphs of hyperpaths}

In this part, we characterize the structure of the firing graph of $P_n^k$ in Fig. \ref{path} associated to arbitrary fixed stable configuration, where the vertex $0$ is the bank vertex.
For a configuration $\c$ on $[0,n(k-1)]$, we denote $\c=(\c(0),\c^1, \cdots, \c^n)$, where $\c^i$ is the restriction of $\c$
on $\widehat{\e}_i:=\e_i \backslash \{(i-1)(k-1)\}$, i.e.
\[\c^i=(\c((i-1)(k-1)+1), \cdots, \c(i(k-1))), i \in [n].\]
Let $\tilde\c^i$ be the restriction of $\c$ on $\e_i$, i.e.
\[\tilde\c^i=(\c((i-1)(k-1)),\c((i-1)(k-1)+1), \cdots, \c(i(k-1))), i \in [n].\]
Then for each $i \in [n]$, $\tilde\c^i$ can be considered as a stable configurations on the completed graph $K_k$ with vertex set $\e_i$ and bank vertex $(i-1)(k-1)$.

Let $\mathcal{S}_k$ be the set of all stable configurations on the completed graph $K_k$,
and $\mathcal{C}_k$ be the set of all critical stable configurations on $K_k$.
Denote by $\B$ the set of all stable configurations on $P_n^k$, and for $s\in [0, n-1]$
\[\B_s=\{(\c^1, \cdots, \c^n)\mid  \tilde\c^i\in \mathcal{C}_k \hbox{~for~} i \in [s], \tilde\c^{s+1}\notin \mathcal{C}_k, \hbox{~and~} \tilde\c^i\in \mathcal{S}_k \hbox{~for~} i \in [s+2, n]\},\]
and $\B_n=\{(\c^1, \cdots, \c^n)\mid \tilde\c^i\in \mathcal{C}_k, i\in [n]\}$. Clearly, $\mathcal{B}$ is the disjoint union of $\B_0, \cdots, \B_n$.

\begin{lem}\label{t(k-1) fired iff}
Let $P_n^k$ be the $k$-uniform hyperpath as in Fig. \ref{path}.
Suppose that $\c_0$ is a stable configuration on $P_n^k$ with $0$ as the bank vertex,  and $\G(\c_0)$
is the firing graph of $P_n^k$ associated to $\c_0$.
Then
\begin{itemize}
\item[(i)] for any $t\in [n]$, the vertex
$t(k-1)$ is fired at $\c\in \V(\G(\c_0))$ if and only if $\c(t(k-1))=k-1$.
\item[(ii)] for any configuration $\c\in \V(\G(\c_0))$, $\W(\c)\ge \W(\c_0)$,
where the equality holds if and only if the vertices $1, \cdots, k-1$ have been fired on $\e_1$ exactly once along any directed path from $\c_0$ to $\c$.
\end{itemize}
\end{lem}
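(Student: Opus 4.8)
The plan is to prove both parts by induction on the layered construction $\V_0\subset\V_1\subset\cdots$ of the firing graph $\G(\c_0)$, reading ``$t(k-1)$ is fired at $\c$'' as ``$t(k-1)=u_\c$'', since by the firing rule the only vertex fired at a non-stable $\c$ is $u_\c=\max\{v\in\widehat{V}:\c(v)\ge k-1\}$. For part (i) I would carry two invariants: (I1) every joint or cored vertex satisfies $\c(t(k-1))\le k-1$; and (I2) if $\c(t(k-1))=k-1$ for some $t$, then every vertex $v>t(k-1)$ is stable, i.e.\ $\c(v)<k-1$. Granting these, (I1) gives the forward implication of (i) (if $t(k-1)=u_\c$ then $\c(t(k-1))\ge k-1$, hence $=k-1$), while (I2) gives the converse (if $\c(t(k-1))=k-1$ then nothing to its right is fireable, so $t(k-1)$ is the maximal fireable vertex).

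For the inductive step I would fix an arrow $\c\to\c'$ given by firing $u_\c$ on an edge $\e$ and check that (I1),(I2) persist. A joint vertex $j\in\e\setminus\{u_\c\}$ lying to the right of $u_\c$ was stable (as $u_\c$ is maximal), so it reaches at most $k-1$; a joint vertex $j<u_\c$ must have had $\c(j)\le k-2$ by (I2) applied to $\c$ (otherwise $u_\c$ could not exceed a saturated $j$), so again $\c'(j)\le k-1$; thus (I1) survives. Re-establishing (I2) is routine except in one case: when firing $u_\c\in\e_{t+1}$ saturates the \emph{left} endpoint $t(k-1)$ of $\e_{t+1}$ from the right. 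Here I must certify that the remaining vertices of $\e_{t+1}$ to the right of $t(k-1)$, and $u_\c$ itself, stay stable in $\c'$. I expect this to force an auxiliary invariant bounding interior vertices, roughly $\c(v)\le 2(k-1)-1$, together with the fact that when a joint vertex is about to be saturated from the right the other right-hand vertices of that edge are already low; pinning down this clean auxiliary bound is the technical heart of part (i).

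For part (ii) I would first compute the weight change along a single arrow: firing the bank $0$ on $\e_1$ raises $\W$ by $k-1$; firing any vertex of $\e_1\setminus\{0\}$ on $\e_1$ returns exactly one dollar to the bank and lowers $\W$ by $1$; every other firing leaves $\W$ unchanged, since chips only move within $\widehat{V}$. As each directed path from $\c_0$ begins with the single bank firing and the bank never fires again, this yields $\W(\c)=\W(\c_0)+(k-1)-m$, where $m$ counts the firings of vertices $1,\dots,k-1$ on $\e_1$ along the path; in particular $m=\W(\c_0)+(k-1)-\W(\c)$ depends only on $\c$, not on the chosen path. Both assertions of (ii) then reduce to the single claim that \emph{each of $1,\dots,k-1$ fires on $\e_1$ at most once}, whence $m\le k-1$, with equality exactly when all $k-1$ of them fire on $\e_1$ once each.

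The main obstacle is this last claim, because the joint vertex $k-1$ couples the clique $\e_1$ to the rest of the path: it may be re-saturated by dollars flowing back from $\e_2$ and could a priori fire on $\e_1$ a second time, driving $\W$ below $\W(\c_0)$. To exclude this I would invoke part (i) for $t=1$ (the vertex $k-1$ fires only when $\c(k-1)=k-1$, and then everything to its right is stable) to show that once the dollars released leftward by the first $\e_1$-firing of $k-1$ have drained --- each interior vertex $1,\dots,k-2$ firing once and returning to its original value --- the bank has recovered precisely $k-1$ dollars and the left clique is reset, so no further $\e_1$-firing is triggered. Equivalently, I would establish the monotonicity $\c(0)\le\c_0(0)$ for every reachable $\c$ (which is exactly $m\le k-1$) by induction, the crux being to certify that a dollar is never returned to the bank once $\c(0)$ has already climbed back to $\c_0(0)$; this is where the gating supplied by part (i) at the joint vertex does the real work.
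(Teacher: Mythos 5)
Your framework matches the paper's: part (i) reduces to the invariant that whenever a joint vertex $t(k-1)$ has value $k-1$ every vertex to its right is stable (your (I2)), and part (ii) reduces, via the bookkeeping $\W(\c)=\W(\c_0)+(k-1)-m$, to the claim that each of $1,\dots,k-1$ fires on $\e_1$ at most once. Both reductions are exactly the ones the paper makes, and your treatment of the interior vertices of $\e_1$ (only $k-2$ dollars can come back to such a vertex before a first repeat, so it cannot fire twice) is the paper's argument verbatim.

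There is, however, a genuine gap, and you have located it yourself: re-establishing (I2) when $t(k-1)$ is saturated \emph{from the right}, i.e.\ by backflow along $\e_{t+1}$. At that moment the vertices of $\e_{t+1}$ strictly between $t(k-1)$ and $u_{\c}$ need not be below $k-1$ merely because $u_{\c}$ is the \emph{maximal} fireable vertex (maximality says nothing about smaller vertices), $u_{\c}$ itself could a priori remain fireable after losing $k-1$ dollars, and stability of the vertices beyond $(t+1)(k-1)$ is not addressed at all. The statement you defer to --- ``when a joint vertex is about to be saturated from the right, the other right-hand vertices of that edge are already low'' --- is not a clean auxiliary bound to be pinned down later; it \emph{is} the content of part (i). The paper proves it by a contradiction argument on the segment of the path between the firing of $t(k-1)$ on $\e_{t+1}$ and its return to value $k-1$: exactly $k-1$ firings on $\e_{t+1}$ occur in that segment, the bound $\c'_i(v)\le \c'_{-1}(v)+(k-1)<2k-2$ forces each interior vertex of $\e_{t+1}$, and hence also $(t+1)(k-1)$, to fire exactly once on $\e_{t+1}$, and the configuration reached after the last such firing already coincides with the alleged counterexample, a contradiction. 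Without this argument (or a worked-out substitute) part (i) is unproven; and since your part (ii) relies on part (i) to gate the joint vertex $k-1$, the proposal as written does not yet establish either assertion of the lemma.
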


\begin{proof} (i)
By the firing rule defined in Section 4.1, it suffices to show that
if $\c(t(k-1))=k-1$, then $\c(v)<k-1$ for any $v>t(k-1)$.
According to the construction, for each $\c\in \mathcal{V}(\G(\c_0))$,
there exists a directed path $P(\c_0, \c)$ from $\c_0$ to $\c$ (without repeated vertices on the path).

Clearly, if each configuration $\c'$ in $P(\c_0, \c)$ except $\c$ satisfies $\c'(t(k-1))<k-1$,
then $\c(v)=\c_0(v)<k-1$ for any $v>t(k-1)$ since each vertex $v>t(k-1)$ is first fired only after $t(k-1)$ is fired.
Otherwise, $t(k-1)$ is fired at some configuration say $\c'_{-1}$ before $\c$ along the path $P(\c_0, \c)$.

Assume to the contrary that $\c(t(k-1))=k-1$ and $\c(u)\ge k-1$ for some $u>t(k-1)$.
Consider the directed path $P(\c_0, \c)$:
\[ \xymatrix{
\c_0\ar[r] & \cdots\ar[r] & \c'_{-1}\ar[r]^{\al_{t(k-1), \ast}} &\c'_0\ar[r]^{\al_{u_0, \ast}} &  \c'_1\ar[r]^{\al_{u_1, \ast}}
&\cdots\ar[r] & \c'_m\ar[r]^{\al_{u_m, \ast}} & \c}\]
Furthermore, we can assume that along the path $P(\c_0, \c)$,
$\c$ first appears with $\c(t(k-1)=k-1$ and $\c(u)\ge k-1$ for some $u>t(k-1)$, and $t(k-1)$ is last fired (at $\c'_{-1}$).
So, $\c'_i(t(k-1))<k-1$ for $i \in [0, m]$.
We also have $\c'_{-1}(t(k-1))=k-1$ and $\c'_{-1}(v)<k-1$ for any $v>t(k-1)$ by the firing rule.

If the arrow $\c'_{-1}\xrightarrow{\al_{t(k-1), \ast}} \c'_0$ satisfies $\al_{t(k-1), \ast}=\al_{t(k-1), \e_t}$, then $u_i<t(k-1)$ for $i=0, 1, \cdots, m$, as $t(k-1)$ is no longer fired so that any vertex $v > t(k-1)$  keep value invariant given by the configurations form $\c'_{-1}$ to $\c$.
It follows that $$\c(v)=\c'_i(v)=\c'_{-1}(v)<k-1$$ for any $v>t(k-1)$ and $i \in [0, m]$, a contradiction.

If the arrow $\c'_{-1}\xrightarrow{\al_{t(k-1), \ast}} \c'_0$ satisfies $\al_{t(k-1), \ast}=\al_{t(k-1), \e_{t+1}}$, then
\[\c'_0(v)=\begin{cases}
0, & \hbox{~if~} v=t(k-1)\\
\c'_{-1}(v)+1 , & \hbox{~if~} v \in [t(k-1)+1,(t+1)(k-1)], \\
\c'_{-1}(v), & \hbox{~if~} v\ge (t+1)(k-1)+1
\end{cases}\]
It follows that the number of configurations in $\{\c'_i\mid \al_{u_i, \ast}=\al_{u_i, \e_{t+1}}, i=0, 1, \cdots, m\}$
is $k-1$ as $\c(t(k-1))=k-1$.
Observe that for any $v \in [t(k-1)+1,(t+1)(k-1)-1]$,
\[\c'_i(v)\le \c'_0(v)+(k-2)=c'_{-1}(v)+(k-1)<2k-2.\]
It forces that each vertex in $[t(k-1)+1,(t+1)(k-1)-1]$ is fired exactly once on the edge $\e_{t+1}$ from $\c'_0$ and $\c$,
implying that the vertex $(t+1)(k-1)$ is also fired exactly once on $\e_{t+1}$.
Let $u_j$ be the vertex in $[t(k-1)+1,(t+1)(k-1)]$ last fired on $\e_{t+1}$.
Then $\c_{u_{j+1}}(t(k-1))=k-1$, and $\c_{u_{j+1}}(v) < k-1$ for any $v > t(k-1)$ by the assumption on $u_j$ and firing rule.
As $t(k-1)$ is last fired at $\c'_{-1}$, $\c_{u_{j+1}}=\c$, a contradiction.
%
%
%

(ii) By definition, we have $\W(\bar\c_0)=\W(\c_0)+k-1$. Observe that for any arrow $\c\xrightarrow{\al_{u, \e_t}} \c'$,
$\W(\c')=\W(\c)-1$ if $\e_t=\e_1$, and $\W(\c')=\W(\c)$ if $\e_t\neq \e_1$.
Therefore, we need only show that no vertex is fired on $\e_1$ more than one time in any directed path starting from $\c_0$.
Let
\[\xymatrix{
\c_0 \ar[r]^{\al_{0,\e_1}} & \bar\c_0=\c_1 \ar[r]^{\al_{u_1, \ast}} &\cdots\ar[r]& \c_\ell\ar[r]^{\al_{u_\ell, \e_1}} & \cdots\ar[r] & \c_t\ar[r]^{\al_{u_t, \e_1}} & \cdots
}\]
be arbitrary directed path in $\G(\c_0)$.
If there exists a vertex in $[k-1]$ be fired more than one time on $\e_1$, we can
assume that $u_t$ is the first occurrence of the vertex satisfying $\al_{u_t, \e_1}=\al_{u_\ell, \e_1}$ for some $\ell < t$.
If $1\le u_\ell = u_t\le k-2$, by our assumption, we have
\[\c_t(u_t)\le \bar\c_0(u_t)-(k-1)+(k-2)=\c_0(u_t)<k-1,\]
since there are at most $k-2$ vertices different from $u_t$ in $\e_1\backslash\{0\}$, a contradiction.
If $u_\ell=u_t=k-1$, we have $\c_\ell(v)<k-1$ for any $v>k-1$ by the firing rule,
$u_i \in [k-2]$ for each $i \in [\ell+1,t-1]$ and each of them is fired only one time by our assumption on $u_t$.
By the part (i), we have $\c_\ell(k-1)=k-1$. Therefore,
$$\c_t(k-1)\le \c_\ell(k-1)-(k-1)+(k-2)=k-2.$$
This is also contradiction.
\end{proof}

Next, we describe the structure of the firing graph of $P_n^k$ associated to any stable configuration.

\begin{prop}\label{Linear Path Firing Graph}
Let $P_n^k$ be the $k$-uniform hyperpath as in Fig. \ref{path}.
Then, for a given stable configuration $\c_0\in \mathcal{B}_s$ on $P_n^k$ with $0$ as the bank vertex, where $s\in [n]$,
the firing graph $\G(\c_0)$ of $P_n^k$ associated to $\c_0$ has the structure as in Fig. \ref{firegraph},
where $\G'$ is the subgraph induced by the configurations obtained by first firing the vertex $s(k-1)$ on the edge $\e_{s+1}$,
each directed cycle in $\G(\c_0)\backslash \G'$ has length $k$,
such that
\begin{enumerate}
\item [(i)]$\G'$ is not empty if and only if $s<n$;

\item[(i)] if $s<n$, there does not exist arrows from $\G'$ to $\G(\c_0)\backslash \G'$;

\item[(ii)] if $s<n$, $\c_0\prec \c'$ for any $\c'\in \G'$.
\end{enumerate}
\end{prop}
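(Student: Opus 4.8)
The plan is to track the \emph{avalanche} triggered by firing the bank vertex $0$ on $\e_1$ and to show that it sweeps left-to-right through $\e_1,\dots,\e_s$, treating each critical edge as an independent copy of the $K_k$ dollar game. The engine of the argument is the following identification: by the firing rule of Section 4.1 the largest unstable vertex is always fired next, so while only the vertices of a single edge $\e_t$ are active the dynamics are exactly those of the dollar game on the complete graph $K_k$ with vertex set $\e_t$ and bank $(t-1)(k-1)$, and firing the boundary vertex $t(k-1)$ on the forward edge $\e_{t+1}$ is precisely the bank-firing that \emph{starts} the $K_k$ game of $\e_{t+1}$. Together with Lemma \ref{t(k-1) fired iff}(i), which says $t(k-1)$ is fired exactly when it carries $k-1$ chips, this processes the hyperpath one edge at a time and suggests an induction that peels off one critical edge at a time: firing the bank on $\e_1$ and running its (critical) $K_k$-avalanche reduces the analysis of $\e_2,\dots,\e_n$ to the same statement for the sub-hyperpath $P_{n-1}^k$ with bank $k-1$ and restricted configuration, now with one fewer critical prefix edge. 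The recursion terminates when it reaches the first non-critical edge $\e_{s+1}$, which is where $\G'$ begins; the base case is the single-edge path $P_1^k$ already analysed (critical $\Rightarrow$ a cycle of length $k$, non-critical $\Rightarrow$ a directed path).

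First I would describe $\G(\c_0)\setminus\G'$. Since $\e_1,\dots,\e_s$ are critical, the single-edge analysis shows the avalanche on each $\e_t$ is a directed cycle of length $k$ (the $k$ firings of that edge's $k$ vertices, with the lower-indexed vertices frozen as unstable and not contributing to this loop); the only branching occurs at a boundary vertex $t(k-1)$ carrying $k-1$ chips, where firing on $\e_t$ closes that cycle while firing on $\e_{t+1}$ propagates the wave one edge to the right. Iterating through the $s$ critical edges produces exactly $s$ directed cycles of length $k$, glued along the shared boundary configurations, and the single-edge analysis shows these are the only directed cycles in $\G(\c_0)\setminus\G'$. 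Because $\e_1,\dots,\e_s$ are critical the wave necessarily reaches $s(k-1)$ with $k-1$ chips, so by Lemma \ref{t(k-1) fired iff}(i) the vertex $s(k-1)$ is fired; it can be fired on $\e_{s+1}$ precisely when that edge exists, i.e. when $s<n$. This gives the first claim: $\G'$ is non-empty if and only if $s<n$.

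The substantive work is the last two claims, where the \emph{non}-criticality of $\e_{s+1}$ enters. For the no-return claim I would show the crossing into $\e_{s+1}$ is irreversible: in $\G(\c_0)\setminus\G'$ every vertex $v>s(k-1)$ still carries its initial value $\c_0(v)$, since the wave has not reached it, whereas in $\G'$ the internal vertices of $\e_{s+1}$ have been disturbed. For a configuration of $\G'$ to re-enter $\G(\c_0)\setminus\G'$ these vertices would have to be restored, which forces $s(k-1)$ to be re-fired; but $\e_{s+1}$ being non-critical, its $K_k$-avalanche is a dying path (as in $P_1^k$) and can never return $s(k-1)$ to $k-1$ chips, so by Lemma \ref{t(k-1) fired iff}(i) the vertex $s(k-1)$ is never fired again inside $\G'$. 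Hence no arrow leaves $\G'$ for $\G(\c_0)\setminus\G'$. For the ordering claim, Lemma \ref{t(k-1) fired iff}(ii) gives $\W(\c')\ge\W(\c_0)$ for every $\c'\in\G'$; if the inequality is strict then $\c_0\prec\c'$ by definition of $\prec$. If $\W(\c')=\W(\c_0)$, the same lemma forces each of $1,\dots,k-1$ to have been fired once on $\e_1$, so $\c'$ is a fully swept configuration, and a redistribution count shows the net chip flow is forward: the partial sums $\sum_{v\le j}\c'(v)$ never exceed $\sum_{v\le j}\c_0(v)$, with a strict drop at the first index where $\c'$ and $\c_0$ differ, which is exactly the anti-lexicographic condition $\c_0\prec\c'$.

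The main obstacle is the confinement statement underlying the no-return claim: proving that once the wave crosses into the non-critical edge $\e_{s+1}$, the boundary vertex $s(k-1)$ can never again accumulate $k-1$ chips, so the crossing cannot be undone. Everything else reduces, via the max-vertex firing rule and Lemma \ref{t(k-1) fired iff}, to the known behaviour of the single-edge game on $K_k$; but this confinement is where the hypothesis $\tilde\c^{s+1}\notin\mathcal{C}_k$ is genuinely used, and it requires a careful accounting of the chips that $\e_{s+1}$ can push back across its left boundary. The equal-weight case of the ordering claim, namely the forward-flow (prefix-sum) estimate, is the second point demanding care, since individual firings need not be prefix-monotone and only the net displacement from $\c_0$ is.
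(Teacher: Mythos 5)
Your overall strategy coincides with the paper's: reduce each edge to a $K_k$ dollar game via the max-vertex firing rule, obtain the $s$ length-$k$ cycles from criticality of $\e_1,\dots,\e_s$ for part (i), separate $\G'$ from its complement by a confinement argument on the suffix $[s(k-1)+1,n(k-1)]$ for part (ii), and combine Lemma \ref{t(k-1) fired iff}(ii) with a first-difference analysis for part (iii). Parts (i) and (ii) are essentially right, though for (ii) your mechanism is stated slightly wrong: restoring the suffix does not ``force $s(k-1)$ to be re-fired''; rather, the paper introduces the suffix weight $\W'(\c)=\sum_{v>s(k-1)}\c(v)$ and shows, via Lemma \ref{t(k-1) fired iff}(ii) applied to the sub-hyperpath $P^k_{n-s}$, that lowering $\W'$ back to $\W'(\c_0)$ would require every vertex of $\e_{s+1}\setminus\{s(k-1)\}$ to fire exactly once on $\e_{s+1}$, which would make $\tilde\c_0^{s+1}$ recurrent, contradicting non-criticality. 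That is the precise form of your ``dying avalanche'' idea and it does close part (ii).

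The genuine gap is the equal-weight case of part (iii). You assert that ``a redistribution count shows the partial sums $\sum_{v\le j}\c'(v)$ never exceed $\sum_{v\le j}\c_0(v)$, with a strict drop at the first index where $\c'$ and $\c_0$ differ,'' but this is exactly the hard content of the proposition and nothing in your sketch produces it. Two things are missing. First, you must show that a first difference exists at all, i.e.\ $\c'\neq\c_0$: if $\e_{s+1}$ were also critical the game would return exactly to $\c_0$, so ruling this out hinges on non-criticality of $\tilde\c_0^{s+1}$ --- a hypothesis you credit only to part (ii). Second, the strict drop has to be located. The paper does this by nesting the firing occurrences $i_1<i_2<\cdots<i_s<j_s<\cdots<j_1$, where $i_t$ and $j_t$ are the forward firing of $t(k-1)$ on $\e_{t+1}$ and its backward firing on $\e_t$, and showing that either some internal vertex of $\e_{s'}$ fires between $i_{s'}$ and $j_{s'}$, forcing $\c'(s'(k-1))<\c_0(s'(k-1))$ while all earlier coordinates agree, or the coordinates agree up to $s(k-1)-1$ and non-criticality of $\tilde\c_0^{s+1}$ forces the drop at $s(k-1)$ itself. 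Your prefix-sum inequality would follow from this analysis, but as stated it is a repackaging of the conclusion rather than a proof of it, and (as you yourself note) individual firings are not prefix-monotone, so there is no short counting argument that delivers it.
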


\begin{figure}[H]
\centering
\ifpdf
  \setlength{\unitlength}{1bp}%
  \begin{picture}(300.97, 79.76)(0,0)
  \put(0,0){\includegraphics{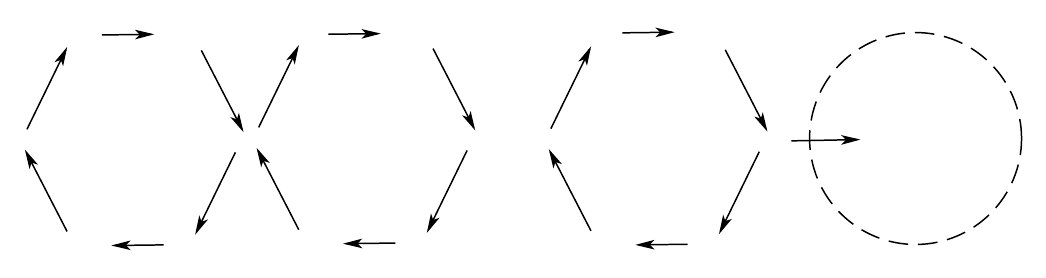}}
  \put(5.67,37.66){\fontsize{7.11}{8.54}\selectfont \textcolor[rgb]{0, 0, 0}{$\textbf{c}_0$}}
  \put(20.60,67.27){\fontsize{7.11}{8.54}\selectfont \textcolor[rgb]{0, 0, 0}{$\bullet$}}
  \put(50.47,8.25){\fontsize{7.11}{8.54}\selectfont \textcolor[rgb]{0, 0, 0}{$\bullet$}}
  \put(46.18,67.92){\fontsize{7.11}{8.54}\selectfont \textcolor[rgb]{0, 0, 0}{$\cdots$}}
  \put(19.14,7.20){\fontsize{7.11}{8.54}\selectfont \textcolor[rgb]{0, 0, 0}{$\cdots$}}
  \put(67.79,37.49){\fontsize{7.11}{8.54}\selectfont \textcolor[rgb]{0, 0, 0}{$\textbf{c}_1$}}
  \put(117.19,8.78){\fontsize{7.11}{8.54}\selectfont \textcolor[rgb]{0, 0, 0}{$\bullet$}}
  \put(85.87,7.73){\fontsize{7.11}{8.54}\selectfont \textcolor[rgb]{0, 0, 0}{$\cdots$}}
  \put(134.52,38.01){\fontsize{7.11}{8.54}\selectfont \textcolor[rgb]{0, 0, 0}{$\textbf{c}_2$}}
  \put(148.75,38.03){\fontsize{7.11}{8.54}\selectfont \makebox[0pt]{\textcolor[rgb]{0, 0, 0}{$\cdots$}}}
  \put(201.35,8.43){\fontsize{7.11}{8.54}\selectfont \textcolor[rgb]{0, 0, 0}{$\bullet$}}
  \put(170.02,7.38){\fontsize{7.11}{8.54}\selectfont \textcolor[rgb]{0, 0, 0}{$\cdots$}}
  \put(218.67,37.66){\fontsize{7.11}{8.54}\selectfont \textcolor[rgb]{0, 0, 0}{$\textbf{c}_s$}}
  \put(250.61,37.68){\fontsize{7.11}{8.54}\selectfont $\mathcal{G}'$}
  \put(85.83,67.48){\fontsize{7.11}{8.54}\selectfont \textcolor[rgb]{0, 0, 0}{$\bullet$}}
  \put(111.42,68.12){\fontsize{7.11}{8.54}\selectfont \textcolor[rgb]{0, 0, 0}{$\cdots$}}
  \put(170.49,67.89){\fontsize{7.11}{8.54}\selectfont \textcolor[rgb]{0, 0, 0}{$\bullet$}}
  \put(196.08,68.53){\fontsize{7.11}{8.54}\selectfont \textcolor[rgb]{0, 0, 0}{$\cdots$}}
  \end{picture}%
\caption{The firing graph of $P_n^k$ associated with $\c_0\in \B_s$}\label{firegraph}
\end{figure}

\begin{proof} (i) Let $\c_0\in \mathcal{B}_s$ be a fixed stable configuration on $P_n^k$, where $s\in [n]$.
Clearly,  $\tilde\c_0^i$ is a stable configuration on $K_k$ for each $i\in [n]$, where  $\tilde\c_0^i$ is defined at the beginning of this section.
By definition, $\c_0\in \mathcal{B}_s$ means that $\tilde\c_0^i\in \mathcal{C}_k$ for $1\le i\le s$ and $\tilde\c^{s+1}_0\in \mathcal{S}_k\backslash \mathcal{C}_k$.
Observe that to fire the vertices of $\e_i$ on the edge $\e_i$,
it is equivalent to consider the dollar game on a completed graph on vertices of $\e_i$ with the bank vertex $(i-1)(k-1)$.
Since $\tilde\c_0^1$ is a critical stable configuration, we get the subgraph of $\G(\c_0)$ by firing the vertices $0,1, \cdots, k-1$ on $\e_1$ and $k-1$ also $\e_2$; see Fig. \ref{1stcycle}.

\begin{figure}[H]
\centering
\ifpdf
  \setlength{\unitlength}{1bp}%
  \begin{picture}(171.85, 78.33)(0,0)
  \put(0,0){\includegraphics{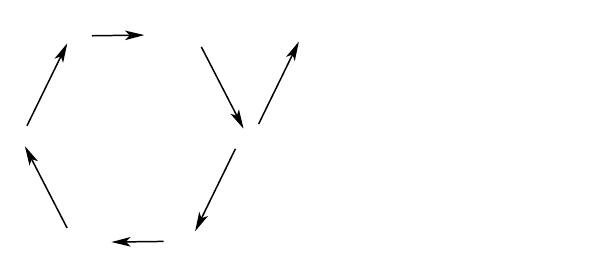}}
  \put(5.67,37.66){\fontsize{9.11}{8.54}{$\textbf{c}_0$}}
  \put(20.40,65.84){\fontsize{7.11}{8.54}\selectfont \textcolor[rgb]{0, 0, 0}{$\bullet$}}
  \put(50.47,8.25){\fontsize{7.11}{8.54}\selectfont \textcolor[rgb]{0, 0, 0}{$\bullet$}}
  \put(44.55,67.10){\fontsize{7.11}{8.54}\selectfont \textcolor[rgb]{0, 0, 0}{$\cdots$}}
  \put(19.14,7.20){\fontsize{7.11}{8.54}\selectfont \textcolor[rgb]{0, 0, 0}{$\cdots$}}
  \put(67.79,37.49){\fontsize{9.11}{8.54}\selectfont \textcolor[rgb]{0, 0, 0}{$\textbf{c}_1$}}
  \put(86.24,67.07){\fontsize{7.11}{8.54}\selectfont \textcolor[rgb]{0, 0, 0}{$\bullet$}}
  \put(82.90,50.59){\fontsize{7.11}{8.54}\selectfont $\alpha_{k-1, \textbf{e}_2}$}
  \put(66.34,21.55){\fontsize{7.11}{8.54}\selectfont $\alpha_{k-1, \textbf{e}_1}$}
  \end{picture}%
\caption{The directed cycle obtained by firing the vertices in $\e_1$} \label{1stcycle}
\end{figure}

Similarly, since $\tilde\c_0^2, \cdots, \tilde\c_0^s$ are critical stable configurations,
each $\e_i$ yields a directed cycle of length $k$
by firing the vertices of $\e_i$ on $\e_i$, $i \in [s]$.
Hence, we get $s$ directed cycles, which implies that $\G'$ is not empty if and only if $s <n$.

(ii) Denote $$\V'=\{\c'\in \G(\c_0)\mid \c' \mbox{~is obtained from~} \c_s \mbox{~by first firing~} s(k-1) \mbox{~on~} \e_{s+1} \},$$
and $\W'(\c)=\sum\limits_{v=s(k-1)+1}^{n(k-1)} \c(v)$
for any $\c\in \G(\c_0)$.
We claim that
$\W'(\c')>\W'(\c)$ for any $\c'\in \V'$ and $\c\in \V(\G(\c_0))\backslash \V'$.


In fact, we choose a configuration $\c'_m \in \V'$ such that $\W'(\c'_m)=\min\{\W'(\c')\mid \c'\in \V'\}$, and
a directed path $P(\c_s, \c'_m)$ from $\c_s$ to $\c'_m$ as follows:
\[P(\c_s, \c'_m):
\xymatrix{\c_s\ar[rr]^{\al_{s(k-1),\e_{s+1}}} &&\c'_1 \ar[r]^{\al_{u'_1, \ast}} &\cdots \ar[r]& \c'_{m-1}\ar[r]^{\al_{u'_{m-1},\ast}}& \c'_m}.\]
We may also assume that $\W'(\c'_i)>\W'(\c'_m)$ for any $i \in [m-1]$.
By the firing rule, we have $\al_{u'_{m-1}, \ast}\neq \al_{u'_{m-1}, \e_j}$ for any $j \in [s]$; otherwise $\W'(\c'_{m-1})=\W'(\c'_m)$.
So there exists some configurations among $\c'_1,\ldots,\c'_{m-1}$ fired on some edge $\e_t$ for $t \ge s+1$.

If there exists $\al_{u'_i, \ast}=\al_{s(k-1), \e_{s+1}}$ for some $i \in [m-1]$, letting $u'_i$ be the last one for the vertex $s(k-1)$ fired on $\e_{s+1}$,
 then it suffices to consider the restriction of the configurations on $[s(k-1), n(k-1)]$,
which is equivalent to consider the firing graph of $P^k_{n-s}$ associated to the configuration $(\c'_i(s(k-1)), \cdots, \c'_i(n(k-1)))$ with $s(k-1)$ as the bank vertex,
where $P^k_{n-s}$ is the sub-hypergraph of $P_n^k$ consisting of the edges $\e_{s+1}, \cdots, \e_n$.
By Lemma \ref{t(k-1) fired iff} (ii), we have $\W'(\c'_m)\ge \W'(\c'_i)$, a contradiction.
So, form $\c'_1$ to $\c'_m$, $s(k-1)$ is no longer fired on $\e_{s+1}$.

By the above discussion, in order to compute the $\W'(\c'_m)$, it suffices to consider the firing graph of $P^k_{n-s}$ associated to the configuration
 $(\c_s(s(k-1)), \cdots, \c_s(n(k-1)))$ with $s(k-1)$ as the bank vertex.
Note that $\c_s(v)=\c_0(v)=\hat\c(v)$ for any $v >s(k-1)$ and any $\hat\c$ in the front $s$ directed cycles of $\G(\c_0)$.
By Lemma \ref{t(k-1) fired iff} (ii) again, we have $$\W'(\c'_m)>\W'(\c_s)=\W'(\c_0)=\W'(\hat\c),$$
since $\tilde\c_0^{s+1}=(\c_0(s(k-1)), \cdots, \c_0((s+1)(k-1)))$ is a non-critical.
So the claim follows, and the subgraph $\G'$ induced by $\V'$ contains no configurations in the front $s$ directed cycles of $\G(\c_0)$.

(iii)
If $s=0$, then Lemma \ref{t(k-1) fired iff}(ii), $\W(\c')>\W(\c_0)$ for any stable configuration $\c'$ in $\G'$ as $\tilde\c^1_0$ is not critical.
If $s\ge 1$, also by Lemma \ref{t(k-1) fired iff} (ii), we have $\W(\c')\ge \W(\c_0)$, and $\W(\c')=\W(\c_0)$
if and only if the vertices $1, \cdots, k-1$ have been fired on $\e_1$ exactly once along any directed path from $\c_0$ to $\c'$.
So it suffices to show there exists a vertex $u\in [n(k-1)]$ such that $\c_0(v)=\c'(v)$ for any $1\le v<u$ and $\c_0(u)>\c'(u)$ when $\W(\c')=\W(\c_0)$.

Suppose that $P(\c_0, \c')$ is a directed path from $\c_0$ to $\c'$ as follows:
\[P(\c_0, \c')\colon \xymatrix{
\c_0\ar[r]^{\al_{0,\e_1}} & \bar\c_0=\c_1\ar[r]^{\al_{u_1, \ast}} & \cdots\ar[r] & \c_s\ar[r]^{\al_{s(k-1), \e_{s+1}}} & \cdots\ar[r] & \c_t\ar[r]^{\al_{u_t, \ast}} & \c'}\]
As $\W(\c')=\W(\c_0)$, we have a unique $\sigma(v) \in [t]$ such that $\al_{u_{\sigma(v)}, \ast}=\al_{v, \e_1}$ for each $v \in [k-1]$ by Lemma \ref{t(k-1) fired iff} (ii).
It follows that $\c'(v)=\c_0(v)$ for $ v \in [k-2]$.

By construction, there exist $1\le i_1<j_1\le t$ such that
$\al_{u_{i_1}, \ast}=\al_{k-1, \e_2}$ and $\al_{u_{j_1}, \ast}=\al_{k-1, \e_1}$.
Note that the vertex $k-1$ is fired on $\e_1$ exactly once so that the arrow from $\c_{u_{j_1}}$ to $\c_{u_{j_1+1}}$ must be traveled.
If there exists $i_1<\ell_1<j_1$ satisfying $1\le u_{\ell_1}\le k-2$, then
by Lemma \ref{t(k-1) fired iff} (i),
\begin{align*}\c_{u_{i_1}}(k-1)&=\c_0(k-1)+1+m_1=k-1,\\
\c_{u_{j_1}}(k-1)&=\c_{u_{\ell_1}}(k-1)+m_2=k-1,
\end{align*}
 where $m_1$ (respectively, $m_2$) is the number of vertices of $[k-2]$ fired on $\e_1$ from $\c_1$ to $\c_{u_{i_1}-1}$ (respectively, from $\c_{u_{\ell_1}}$  to $\c_{u_{j_1}-1}$).
By the firing rule, $\c_{\ell_1}(k-1)<k-1$.
So,
 $$\c'(k-1)=k-1-(m_1+m_2+1)=\c_0(k-1)-((k-1)-\c_{u_{\ell_1}}(k-1))<\c_0(k-1),$$
 which implies that $\c_0\prec \c'$.
Otherwise, we have a unique $\sigma(v)\in [i_1+1,j_1-1]$ such that $\al_{u_{\sigma(v)},\ast}=\al_{v, \e_2}$ for each $v \in [k, 2(k-1)]$.
So $\c'(v)=\c_0(v)$ for $v \in [k-1,2(k-1)-1]$.
Then we continue to compare the values of $v$ given by $\c_0$ and $\c'$ for $v \ge 2k$.

Consider the firing graph of $P^k_{n-1}$ starting from the configuration
$(\c_{i_1}(k-1)$, $\cdots$, $\c_{i_1}(n(k-1)))=(\c_0(k), \cdots, \c_0(n(k-1))$,
where $P^k_{n-1}$ is the sub-hypergraph of $P_n^k$ consisting of the edge $\e_2, \cdots, \e_n$ with $k-1$ as the bank vertex.
Similar to the analysis for $P_n^k$, there exist $i_1<i_2<j_2<j_1$ such that
$\al_{u_{i_2},\ast}=\al_{2(k-1), \e_3}$ and $\al_{u_{j_2}, \ast}=\al_{2(k-1), \e_2}$.
If there exists $i_2<\ell_2<j_2$ satisfying $k\le u_{\ell_2}\le 2(k-1)-1$, then we have $\c'(2(k-1))<\c_0(2(k-1))$.
Otherwise, for each $v \in [2(k-1)+1,  3(k-1)]$, we have a unique $\sigma(v) \in [i_2+1,j_2-1]$ such that $\al_{u_{\sigma(v)}, \ast}=\al_{v, \e_3}$.
So $\c'(v)=\c_0(v)$ for $v \in [2(k-1),3(k-1)-1]$.
Then we continue to consider the firing graph of $P^k_{n-2}$ starting from the
configuration $(\c_{i_2}(2(k-1))$, $\cdots$, $\c_{i_2}(n(k-1)))=(\c_0(2(k-1)), \cdots, \c_0(n(k-1)))$, and so on.

By the above discussion, it follows that either $\c_0(v)=\c'(v)$ for $v \in [1, s'(k-1)-1]$ and $\c_0(s'(k-1))>\c'(s'(k-1))$ for some $s' \in [1,s-1]$,
or $\c_0(v)=\c'(v)$ for $v \in [1, s(k-1)-1]$.
If the former case occurs, then $\c_0 \prec \c'$.
Otherwise, there exist $i_1<i_2<\cdots < i_s<j_s<\cdots<j_2<j_1$ such that
$\al_{u_{i_s},\ast}=\al_{s(k-1), \e_{s+1}}$ and $\al_{u_{j_s}, \ast}=\al_{s(k-1), \e_s}$.
As $\tilde\c_0^{s+1}$ is a non-critical stable configuration,
there exists $i_s<\ell_s<j_s$ satisfying $(s-1)(k-1)+1\le u_{\ell_s} \le s(k-1)-1$, implying $\c'(s(k-1))<\c_0(s(k-1))$ by a similar discussion.
The result follows.
\end{proof}

\subsection{Formulas from firing graph}

Our goal is to compute the determinant of the left multiplication map by $f_w$ of $A=\dfrac{\C[x_v\colon v\in \widehat{V}]}{\lr{f_v\colon v\in \widehat{V}}}$ associated with
the hyperpath $P^k_n$ as in Fig. \ref{path}, where $w$ is taken to be the vertex $0$.
We focus on the firing graph $\G(\c_0)$ of the structure in Fig. \ref{firegraph}.
If a configuration $\c$ in $\G(\c_0)$ refers to a homogeneous polynomial $\x^\c=\prod\limits_{\c\in \widehat{V}} x_v^{\c(v)}$ by ignoring the bank vertex,
then we have
\[m_{f_0}(\x^{\c_0})=\la\x^{\c_0}-\x^{\widehat{\e}_1}\x^{\c_0}=\la\x^{\c_0}-\x^{\bar\c_0},\]
and
\[\x^\c=\sum_{(\c, \c')\in \E(\G(\c_0))} \dfrac{1}{\la}\x^{\c'}\]
for any $\c\neq \c_0$.

In view of this, we consider the weighted directed graph, still denoted by $\G(\c_0)$,
by assigning the weight $1$ on the arrow $\al_{0,\e_1}$ and the weight $\frac{1}{\la}$ on the others.
In order to obtain more explicit formulas, we need simplify the weighted firing graph $\G(\c_0)$.
Observe that the directed graph $\G(\c_0)$ may contain non-stable configurations.
Next, we will erase all non-stable configurations by modifying the weight until all of vertices are stable configurations.

Define a function $g^i(x)$ in indeterminant $\la$ recursively:
\begin{equation}\label{fung}
g^{-1}(x)=0, g^{0}(x)=1, g^{1}(x)=g(x)=\dfrac{1}{1-\frac{x}{\la^k}}, g^{i}(x)=g^{i-1}(g(x)) \mbox{~for~} i \ge 2.
\end{equation}

\begin{lem}\label{det of f_w in linear path}
Let $\c_0$ be a configuration in $\B_s$, $0\le s \le n$. Then we have
\[\x^{\bar\c_0}=\dfrac{g^{s-1}(1)}{\la^{k-1}}\x^{\c_0}+\sum_{\c'\in \SV(\G')} h_{\c'}(\la)\x^{\c'}.\]
where $\bar\c_0(v)=\begin{cases} \c_0(v)+1, & 1\le v\le k-1,\\ \c_0(v), &k\le v\le n(k-1)\end{cases}$,
$g^i(x)$ is defined as in (\ref{fung}), $h_{\c'}(\la)$ is a function in $\la$ for each $\c'$,
and $\SV(\G')$ is the set of all stable configurations in $\G'$ defined in Proposition \ref{Linear Path Firing Graph}.
\end{lem}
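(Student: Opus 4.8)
The plan is to read the desired identity as the result of solving, over the field $\C(\la)$, the finite linear system furnished by the firing graph $\G(\c_0)$. Recall from the discussion preceding \eqref{fung} that in $A$ one has $\x^\c=\frac1\la\sum_{(\c,\c')\in\E(\G(\c_0))}\x^{\c'}$ for every non-stable $\c$, while a stable configuration contributes a basis vector of $A$ and is terminal for this rewriting. Hence reducing $\x^{\bar\c_0}$ amounts to pushing it through $\G(\c_0)$ until only stable monomials remain, and the coefficient of a stable $\x^{\d}$ is the sum, over all directed walks $\bar\c_0\rightsquigarrow\d$ in $\G(\c_0)$, of the products of the arrow weights $\frac1\la$. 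By Proposition \ref{Linear Path Firing Graph} the stable configurations that can be reached are exactly $\c_0$ together with those lying in $\G'$, and there are no arrows leaving $\G'$; this already yields the stated shape $\x^{\bar\c_0}=c\,\x^{\c_0}+\sum_{\c'\in\SV(\G')}h_{\c'}(\la)\x^{\c'}$, the functions $h_{\c'}$ collecting the $\G'$-walks. It then remains to identify the scalar $c$ with $g^{s-1}(1)/\la^{k-1}$, and since $\c_0$ is a basis element this scalar is the well-defined total weight of the directed walks $\bar\c_0\rightsquigarrow\c_0$.

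I would compute $c$ by induction on $s$. For the base case $s=0$ the edge $\e_1$ is non-critical, so by Lemma \ref{t(k-1) fired iff}(ii) and Proposition \ref{Linear Path Firing Graph}(iii) every configuration reachable from $\bar\c_0$ has strictly larger weight than $\c_0$ and no walk returns to $\c_0$; thus $c=0=g^{-1}(1)/\la^{k-1}$. For the inductive step ($s\ge1$) the key is the junction vertex $k-1$, the unique vertex shared by $\e_1$ and the sub-hyperpath $P'=P_{n-1}^k$ on the edges $\e_2,\dots,\e_n$ with bank $k-1$. Writing $\c_0'$ for the restriction of $\c_0$ to $P'$, the hypothesis $\c_0\in\B_s$ forces $\c_0'\in\B_{s-1}$ for $P'$. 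Each directed walk $\bar\c_0\rightsquigarrow\c_0$ decomposes according to the firings of $k-1$: a backward firing (on $\e_1$) completes the critical avalanche of $\e_1$ and delivers $\c_0$ with weight $1/\la^{k-1}$, while a forward firing (on $\e_2$) launches an excursion into $P'$. Applying the inductive hypothesis to $P'$ shows that such an excursion returns to the configuration poised at $k-1$ with total weight $g^{s-2}(1)/\la^{k-1}$, so that one complete forward excursion is a self-loop of weight $g^{s-2}(1)/\la^{k}$ (the extra $1/\la$ paid by the forward firing of $k-1$), and the branches that do not return are precisely the walks escaping into $\G'$. Summing the geometric series over the number $m\ge0$ of excursions before the final backward firing gives
\[
c=\Big(\sum_{m\ge0}\big(g^{s-2}(1)/\la^{k}\big)^{m}\Big)\frac{1}{\la^{k-1}}
=\frac{1}{1-g^{s-2}(1)/\la^{k}}\cdot\frac{1}{\la^{k-1}}
=\frac{g\big(g^{s-2}(1)\big)}{\la^{k-1}}
=\frac{g^{s-1}(1)}{\la^{k-1}},
\]
using \eqref{fung} and $g^{s-1}=g\circ g^{s-2}$, which is the claim.

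The hard part will be the inductive step, specifically the rigorous identification of the self-loop weight $g^{s-2}(1)/\la^{k}$. Passing to $P'$ replaces the genuine variable $x_{k-1}$ of $A$ by the bank role of $k-1$ in the corresponding quotient algebra for $P'$, so one must track the exponent of $x_{k-1}$ across an excursion and verify that the excursion returns \emph{exactly} to the poised configuration (not merely to a configuration agreeing with $\c_0'$ on $P'$), so that the firing of $k-1$ is again enabled and the self-loop genuinely closes. One must also check, via Proposition \ref{Linear Path Firing Graph}(i),(ii), that the non-returning forward branches land in $\SV(\G')$ and never feed back, so that they only modify the $h_{\c'}$ and leave $c$ untouched. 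A subsidiary point, again supplied by Proposition \ref{Linear Path Firing Graph}, is that $\c_0$ is the unique stable configuration among the front $s$ cycles, which is what licenses reading off $c$ as a single coefficient; the order-independence needed here is automatic because $\c_0$ is a basis vector of $A$.
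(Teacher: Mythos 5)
Your proposal is correct and, at its computational core, it performs the same calculation as the paper: both solve the finite linear system attached to the chain--of--cycles structure of $\G(\c_0)$ supplied by Proposition \ref{Linear Path Firing Graph}, and in both arguments each critical edge contributes one application of $w\mapsto g(w)=\bigl(1-w/\la^k\bigr)^{-1}$ --- your geometric series $\sum_{m\ge 0}(w/\la^k)^m$ is exactly the paper's step of solving $\x^{\c_{i}}=\tfrac{1}{\la}\x^{\bar\c_{i}}+\tfrac{w}{\la^k}\x^{\c_{i}}+\cdots$ for $\x^{\c_i}$. The organization, however, differs. The paper stays inside the single weighted firing graph $\G(\c_0)$ and eliminates the $s$ directed cycles from the far end (the $s$-th, adjacent to $\G'$) back to the first, accumulating $g(1),g^2(1),\dots,g^{s-1}(1)$ and then reading the coefficient of $\x^{\c_0}$ off the first cycle; no induction on the hyperpath is needed because Proposition \ref{Linear Path Firing Graph} already describes all of $\G(\c_0)$. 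You instead induct on $s$ by peeling off $\e_1$ and invoking the statement for $P^k_{n-1}$ with bank $k-1$, which obliges you to identify each forward excursion at the junction vertex $k-1$ with a walk in the firing graph of $P^k_{n-1}$ and to reconcile the two weightings (the bank arrow of $P^k_{n-1}$ carries weight $1$ while the corresponding arrow $\al_{k-1,\e_2}$ in $\G(\c_0)$ carries weight $\tfrac{1}{\la}$ --- your ``extra $\tfrac{1}{\la}$''), and to check that the max-first firing rule really confines the excursion to $[k-1,n(k-1)]$ until it returns to the poised configuration. You rightly flag this as the hard part; it does go through, by the same analysis that proves Proposition \ref{Linear Path Firing Graph}, but it duplicates work the paper has already packaged into that proposition, so your route is not shorter. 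What it buys is a transparent renewal-type explanation of why the iterate $g^{s-1}(1)$ appears, and an argument that is literally inductive in the length of the hyperpath.
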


\begin{proof} Clearly, for $s=0$,
\[\x^{\bar\c_0}=\sum_{\c'\in \SV(\G')} h_{\c'}(\la)\x^{\c'}=\frac{g^{s-1}(1)}{\la^{k-1}}\x^{\c_0}+\sum_{\c'\in \S(\G')} h_{\c'}(\la)\x^{\c'},\]
and for $s=1$,
\[\x^{\bar\c_0}=\dfrac{1}{\la^{k-1}}\x^{\c_0}+\sum_{\c'\in \SV(\G')} h_{\c'}(\la)\x^{\c'}.\]
For $s\ge 2$, from the subgraph of $\G(\c_0)$ in Fig. \ref{s-cycle}, we get
\[\x^{\c_{s-1}}=\dfrac{1}{\la}\x^{\bar\c_{s-1}}+\dfrac{1}{\la^k}\x^{\c_{s-1}}+\dfrac{1}{\la^{l_s}}\x^{\c'_1},\]
where $l_s$ is the length of the directed path from $\c_{s-1}$ to $\c'_1$.
It follows that
\[\x^{\c_{s-1}}=g(1)\dfrac{1}{\la} \x^{\bar\c_{s-1}}+g(1)\dfrac{1}{\la^{l_s}}\x^{\c'_1}.\]
Therefore, we can erase the $s$-th directed cycle by adding an arrow $(\c_{s-1}, \c'_1)$ with weight $\dfrac{g(1)}{\la^{l_s}}$ and replacing the weight
of $(\c_{s-1}, \bar\c_{s-1})$ by $\dfrac{g(1)}{\la}$; see Fig. \ref{(s-1)-cycle}.

\begin{figure}[H]
\centering
\ifpdf
  \setlength{\unitlength}{1bp}%
  \begin{picture}(167.74, 96.21)(0,0)
  \put(0,0){\includegraphics{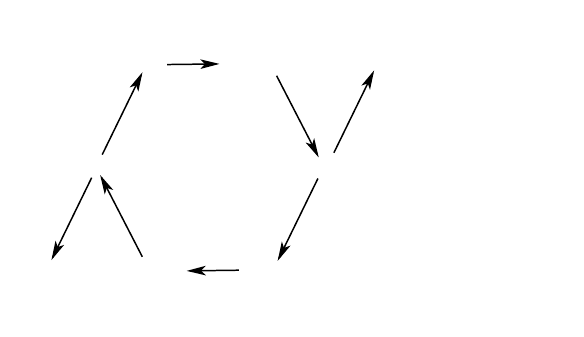}}
  \put(27.34,47.36){\fontsize{7.11}{8.54}\selectfont {$\textbf{c}_{s-1}$}}
  \put(42.06,75.55){\fontsize{7.11}{8.54}\selectfont {$\bullet$}}
  \put(72.14,17.96){\fontsize{7.11}{8.54}\selectfont {$\overline{\textbf{c}}_s$}}
  \put(66.22,76.80){\fontsize{7.11}{8.54}\selectfont {$\cdots$}}
  \put(40.81,16.90){\fontsize{7.11}{8.54}\selectfont {$\cdots$}}
  \put(89.46,47.19){\fontsize{7.11}{8.54}\selectfont {$\textbf{c}_s$}}
  \put(107.91,76.77){\fontsize{7.11}{8.54}\selectfont{$\textbf{c}'_1$}}
  \put(88.01,31.26){\fontsize{7.11}{8.54}\selectfont $\frac{1}{\lambda}$}
  \put(102.02,59.29){\fontsize{7.11}{8.54}\selectfont $\frac{1}{\lambda}$}
  \put(76.33,59.29){\fontsize{7.11}{8.54}\selectfont $\frac{1}{\lambda}$}
  \put(23.27,59.29){\fontsize{7.11}{8.54}\selectfont $\frac{1}{\lambda}$}
  \put(27.94,31.26){\fontsize{7.11}{8.54}\selectfont $\frac{1}{\lambda}$}
  \put(58.69,10.20){\fontsize{7.11}{8.54}\selectfont $\frac{1}{\lambda}$}
  \put(50.63,84.98){\fontsize{7.11}{8.54}\selectfont $\frac{1}{\lambda}$}
  \put(6.65,17.00){\fontsize{7.11}{8.54}\selectfont {$\overline{\textbf{c}}_{s-1}$}}
  \put(12.67,35.23){\fontsize{7.11}{8.54}\selectfont $\frac{1}{\lambda}$}
  \end{picture}%
\caption{The weighted $s$-th direct cycle by firing the vertices in $\e_s$}\label{s-cycle}
\end{figure}

\begin{figure}[H]
\centering
\ifpdf
  \setlength{\unitlength}{1bp}%
  \begin{picture}(167.74, 96.21)(0,0)
  \put(0,0){\includegraphics{Fig8.pdf}}
  \put(27.34,47.36){\fontsize{7.11}{8.54}\selectfont {$\textbf{c}_{s-2}$}}
  \put(42.06,75.55){\fontsize{7.11}{8.54}\selectfont {$\bullet$}}
  \put(72.14,17.96){\fontsize{7.11}{8.54}\selectfont {$\overline{\textbf{c}}_{s-1}$}}
  \put(66.22,76.80){\fontsize{7.11}{8.54}\selectfont {$\cdots$}}
  \put(40.81,16.90){\fontsize{7.11}{8.54}\selectfont {$\cdots$}}
  \put(89.46,47.19){\fontsize{7.11}{8.54}\selectfont {$\textbf{c}_{s-1}$}}
  \put(107.91,76.77){\fontsize{7.11}{8.54}\selectfont {$\textbf{c}'_1$}}
  \put(88.01,31.26){\fontsize{7.11}{8.54}\selectfont $\frac{g(1)}{\lambda}$}
  \put(102.02,59.29){\fontsize{7.11}{8.54}\selectfont $\frac{g(1)}{\lambda^{l_s}}$}
  \put(76.33,59.29){\fontsize{7.11}{8.54}\selectfont $\frac{1}{\lambda}$}
  \put(23.27,59.29){\fontsize{7.11}{8.54}\selectfont $\frac{1}{\lambda}$}
  \put(27.94,31.26){\fontsize{7.11}{8.54}\selectfont $\frac{1}{\lambda}$}
  \put(58.69,10.20){\fontsize{7.11}{8.54}\selectfont $\frac{1}{\lambda}$}
  \put(50.63,84.98){\fontsize{7.11}{8.54}\selectfont $\frac{1}{\lambda}$}
  \put(6.65,17.00){\fontsize{7.11}{8.54}\selectfont {$\overline{\textbf{c}}_{s-2}$}}
  \put(12.67,35.23){\fontsize{7.11}{8.54}\selectfont $\frac{1}{\lambda}$}
  \end{picture}%
\caption{The modified weighted $(s-1)$-th direct cycle by erasing the $s$-th directed cycle} \label{(s-1)-cycle}
\end{figure}

Let $l_j$ be the length of the directed path from $\c_{j-1}$ to $\c_j$ for $2\le j\le s-1$.
From this modified weighted subgraph, we immediately get
\[\x^{\c_{s-2}}=\dfrac{1}{\la}\x^{\bar\c_{s-2}}+g(1)\dfrac{1}{\la^k}\x^{\c_{s-2}}+g(1)\dfrac{1}{\la^{l_{s-1}+l_s}}\x^{\c'_1},\]
and
\[\x^{\c_{s-2}}=g^{2}(1)\dfrac{1}{\la} \x^{\bar\c_{s-2}}+g^{2}(1)g(1)\dfrac{1}{\la^{l_{s-1}+l_s}}\x^{\c'_1}.\]

Assume that $\x^{\c_{s-i}}=g^{i}(1)\dfrac{1}{\la} \x^{\bar\c_{s-i}}+\dfrac{g^{i}(1)\cdots g^{2}(1)g(1)}{\la^{l_{s-i+1}+l_{s-i+2}\cdots+l_s}}\x^{\c'_1}$ for $i\ge 1$.
Then  from the modified weighted subgraph (see Fig. \ref{(s-i)-cycle}),
 we have
\[\x^{\c_{s-(i+1)}}=
\dfrac{1}{\la} \x^{\bar\c_{s-(i+1)}}
+g^{i}(1)\dfrac{1}{\la^k} \x^{\c_{s-(i+1)}}
+\dfrac{g^{i}(1)\cdots g^{2}(1)g(1)}{\la^{l_{s-i}+l_{s-i+1}+\cdots+l_s}}\x^{\c'_1},\]
and hence
\[\x^{\c_{s-(i+1)}}=
g^{i+1}(1)\dfrac{1}{\la} \x^{\bar\c_{s-(i+1)}}
+\dfrac{g^{i+1}(1)\cdots g^{2}(1)g(1)}{\la^{l_{s-i}+l_{s-i+1}+\cdots+l_s}}\x^{\c'_1}.\]

\begin{figure}[H]
\centering
\ifpdf
  \setlength{\unitlength}{1bp}%
  \begin{picture}(167.74, 96.21)(0,0)
  \put(0,0){\includegraphics{Fig8.pdf}}
  \put(27.34,47.36){\fontsize{7.11}{8.54}\selectfont {$\textbf{c}_{s-(i+1)}$}}
  \put(42.06,75.55){\fontsize{7.11}{8.54}\selectfont {$\bullet$}}
  \put(72.14,17.96){\fontsize{7.11}{8.54}\selectfont {$\overline{\textbf{c}}_{s-i}$}}
  \put(66.22,76.80){\fontsize{7.11}{8.54}\selectfont {$\cdots$}}
  \put(40.81,16.90){\fontsize{7.11}{8.54}\selectfont {$\cdots$}}
  \put(89.46,47.19){\fontsize{7.11}{8.54}\selectfont {$\textbf{c}_{s-i}$}}
  \put(107.91,76.77){\fontsize{7.11}{8.54}\selectfont {$\textbf{c}'_1$}}
  \put(88.01,31.26){\fontsize{7.11}{8.54}\selectfont $\frac{g^{i}(1)}{\lambda}$}
  \put(102.02,59.29){\fontsize{7.11}{8.54}\selectfont $\frac{g^{i}(1)\cdots g^{2}(1)g(1)}{\lambda^{l_{s-i+1}+\cdots+l_{s-1}+l_s}}$}
  \put(76.33,59.29){\fontsize{7.11}{8.54}\selectfont $\frac{1}{\lambda}$}
  \put(23.27,59.29){\fontsize{7.11}{8.54}\selectfont $\frac{1}{\lambda}$}
  \put(27.94,31.26){\fontsize{7.11}{8.54}\selectfont $\frac{1}{\lambda}$}
  \put(58.69,10.20){\fontsize{7.11}{8.54}\selectfont $\frac{1}{\lambda}$}
  \put(50.63,84.98){\fontsize{7.11}{8.54}\selectfont $\frac{1}{\lambda}$}
  \put(6.65,17.00){\fontsize{7.11}{8.54}\selectfont {$\overline{\textbf{c}}_{s-(i+1)}$}}
  \put(12.67,35.23){\fontsize{7.11}{8.54}\selectfont $\frac{1}{\lambda}$}
  \end{picture}
 \caption{The  modified weighted $(s-i)$-th direct cycle by erasing the $(s-i+1)$-th, $\ldots$, $s$-th directed cycles}\label{(s-i)-cycle}
\end{figure}

Taking $i=s-2$, we get
\[\x^{\c_{1}}=
g^{s-1}(1)\dfrac{1}{\la} \x^{\bar\c_{1}}
+\dfrac{g^{s-1}(1)\cdots g^{2}(1)g(1)}{\la^{l_{2}+l_{3}+\cdots+l_s}}\x^{\c'_1}.\]
It follows that
\begin{align*}
\x^{\bar\c_0}=\dfrac{g^{s-1}(1)}{\la^{k-1}}\x^{\c_0}+\prod_{i=1}^{s-1} g^{i}(1) \dfrac{1}{\la^{\sum_{j=1}^{s}l_j}}\x^{\c'_1},
\end{align*}
where $l_1$ is the length of the directed path from $\bar\c_0$ to $\c_1$.
By Proposition \ref{Linear Path Firing Graph} (i), we know that there is not arrow from $\G'$ to $\G$ and therefore
$\x^{\c'_1}=\sum_{\c'\in \SV(\G')}\bar h_{\c'}(\la) \x^{\c'}$ for some function $\bar h_{\c'}(\la)$ in $\la$. It follows that
\[\x^{\bar\c_0}=\dfrac{g^{s-1}(1)}{\la^{k-1}}\x^{\c_0}+\sum_{\c'\in \SV(\G')} h_{\c'}(\la)\x^{\c'}.\]
\end{proof}

\subsection{The characteristic polynomial of hyperpaths}
We will give a recursive formula of the characteristic polynomial of hyperpaths.
Define
\begin{equation}\label{mu}
\mu_{n,k}(s)=\begin{cases}
k^{s(k-2)}((k-1)^{k-1}-k^{k-2})(k-1)^{(n-s-1)(k-1)}, & s \in [0, n-1],\\
k^{n(k-2)}, & s=n.
\end{cases}
\end{equation}

\begin{thm}\label{charpath}
Let $\phi^P_{n, k}(\la)$ be the characteristic polynomial of the hyperpath $P_n^k$, where $n\ge 2$. Then
\begin{align*}
\phi^P_{n, k}(\la)=\la^{(k-2)(k-1)^{n(k-1)}}\prod_{s=0}^{n}\left(\la-\dfrac{g^{s-1}(1)}{\la^{k-1}}\right)^{\mu_{n,k}(s)}\phi_{n-1, k}(\la)^{(k-1)^{k-1}}
\end{align*}
where $g^i(x)$ is defined in (\ref{fung}) and $\mu_{n,k}(s)$ is defined in (\ref{mu}).
\end{thm}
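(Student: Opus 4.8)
The plan is to use the endpoint $0$ as the bank vertex $w$ and run the whole computation through the Poisson formula of Lemma~\ref{lower dim formula}, which gives $\phi^P_{n,k}(\la)=\phi_{\widehat H}(\la)^{k-1}\det(m_{f_0}\colon A\to A)$. So the first task is to identify $\widehat H$. Deleting $0$ together with the edge $\e_1=\{0,1,\dots,k-1\}$ leaves the $k-2$ vertices $1,\dots,k-2$ isolated, each carrying $\bar F_v=\la x_v^{k-1}$, disjoint from the hyperpath $P^k_{n-1}$ supported on $\e_2,\dots,\e_n$ (whose left endpoint is $k-1$). Lemma~\ref{Res(FG)=ResFResG} then factors $\Res(\bar F_v\colon v\in\widehat V)$ as $\Res(\la x_1^{k-1},\dots,\la x_{k-2}^{k-1})^{(k-1)^{(n-1)(k-1)+1}}\,\phi^P_{n-1,k}(\la)^{(k-1)^{k-2}}$, and Lemma~\ref{Res(la)=laRes} together with the normalization $\Res(x_1^{k-1},\dots,x_{k-2}^{k-1})=1$ evaluates the isolated factor as $\la^{(k-2)(k-1)^{k-3}}$. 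Raising to the $(k-1)$-st power and simplifying exponents (the exponent of $\la$ collapses to $(k-2)(k-1)^{n(k-1)}$ and that of $\phi^P_{n-1,k}$ to $(k-1)^{k-1}$) recovers precisely the two outer factors of the claimed identity; this step is pure exponent bookkeeping.

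It then remains to prove $\det(m_{f_0}\colon A\to A)=\prod_{s=0}^{n}\bigl(\la-g^{s-1}(1)/\la^{k-1}\bigr)^{\mu_{n,k}(s)}$. I would take as a $\C$-basis of $A$ the monomials $\x^{\c}$ indexed by stable configurations $\c\in\B$: the relations $f_v=0$ allow one to rewrite any power $x_v^{k-1}$, so every monomial reduces to a combination of stable ones, and since $|\B|=(k-1)^{n(k-1)}=\dim A$ these monomials span and hence form a basis. On this basis $m_{f_0}(\x^{\c_0})=\la\x^{\c_0}-\x^{\bar\c_0}$, and for $\c_0\in\B_s$ Lemma~\ref{det of f_w in linear path} rewrites this as $\bigl(\la-g^{s-1}(1)/\la^{k-1}\bigr)\x^{\c_0}-\sum_{\c'\in\SV(\G')}h_{\c'}(\la)\,\x^{\c'}$. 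The decisive input is Proposition~\ref{Linear Path Firing Graph}(iii): every stable $\c'\in\G'$ satisfies $\c_0\prec\c'$, so after ordering $\B$ by the anti-lexicographic order $\prec$ the matrix of $m_{f_0}$ is triangular with diagonal entries $\la-g^{s-1}(1)/\la^{k-1}$. Consequently $\det(m_{f_0})=\prod_{s=0}^{n}\bigl(\la-g^{s-1}(1)/\la^{k-1}\bigr)^{|\B_s|}$.

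Finally I would reduce everything to counting $|\B_s|$. Since whether $\tilde\c^i$ lies in $\mathcal C_k$ or in $\mathcal S_k$ depends only on the restriction $\c^i$ to $\widehat{\e}_i$, the defining conditions of $\B_s$ are independent across the $n$ blocks. With $|\mathcal S_k|=(k-1)^{k-1}$ and, by Example~\ref{critical conf of completed graph} (Cayley's count of the spanning trees of $K_k$ via Lemma~\ref{critical conf}), $|\mathcal C_k|=k^{k-2}$, this yields $|\B_s|=k^{s(k-2)}\bigl((k-1)^{k-1}-k^{k-2}\bigr)(k-1)^{(n-s-1)(k-1)}$ for $s\in[0,n-1]$ and $|\B_n|=k^{n(k-2)}$, i.e. exactly $\mu_{n,k}(s)$; a geometric-sum check confirms $\sum_{s}|\B_s|=(k-1)^{n(k-1)}=\dim A$. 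Substituting $|\B_s|=\mu_{n,k}(s)$ and multiplying by the two outer factors from the Poisson step gives the theorem.

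The genuine content has already been absorbed into Proposition~\ref{Linear Path Firing Graph} and Lemma~\ref{det of f_w in linear path}, so the main obstacles that remain are verifying that the stable monomials really do form a basis of $A$ (so that triangularity of $m_{f_0}$ is meaningful and the determinant equals the product of the diagonal entries), and carrying out the exponent arithmetic of the Poisson reduction and the independent count of $|\B_s|$ without slips.
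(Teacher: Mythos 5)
Your proposal is correct and follows essentially the same route as the paper's own proof: the Poisson reduction at the bank vertex $0$ with the same factorization of $\Res(\bar F_v)$ via Lemmas \ref{Res(FG)=ResFResG} and \ref{Res(la)=laRes}, the basis of stable monomials ordered anti-lexicographically, triangularity of $m_{f_0}$ from Lemma \ref{det of f_w in linear path} and Proposition \ref{Linear Path Firing Graph}, and the block-independent count $|\B_s|=\mu_{n,k}(s)$. The only difference is that you supply slightly more justification (spanning plus dimension count for the basis, and the explicit appeal to Cayley's formula) than the paper, which simply asserts these points.
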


\begin{proof} By definition, the characteristic polynomial of $P_n^k$ is
\[\phi^P_{n, k}(\la)=\Res(F_0, F_1, \cdots, F_{r}),\]
where
\begin{align*}
F_i(x_0, x_1, \cdots, x_r)=\la x_i^{k-1}-\sum\limits_{i\in \e\in E} \x^{\e\backslash \{i\}}
\end{align*}
is a homogeneous polynomial in variables $x_0, x_1, \cdots, x_r$ of degree $k-1$ for $i=0, 1, \cdots, r=n(k-1)$.
Clearly,
\[
f_i(x_1, \cdots, x_r)=
\begin{cases}
\la-x_1\cdots x_{k-1}, & \hbox{~if~}i=0,\\
\la x_i^{k-1}-x_1\cdots x_{i-1}x_{i+1}\cdots x_{k-1}, & \hbox{~if~} i \in [k-2],\\
\la x_{k-1}^{k-1} -x_1\cdots x_{k-2}-x_k\cdots x_{2(k-1)}, & \hbox{~if~} i=k-1,\\
F_i(x_0, x_1, \cdots, x_r), & \hbox{~otherwise},
\end{cases}
\]
and
\[
\bar F_i(x_1, \cdots, x_r)=
\begin{cases}
\la x_i^{k-1}, & \hbox{~if~} i \in [k-2],\\
\la x_{k-1}^{k-1} -x_k\cdots x_{2(k-1)}, & \hbox{~if~}i=k-1,\\
F_i(x_0, x_1, \cdots, x_r), & \hbox{~otherwise}.
\end{cases}
\]
By Lemma \ref{Res(FG)=ResFResG} and \ref{Res(la)=laRes}, we have
\begin{align*}
\phi_{n, k}(\la)=&\Res(\bar F_1, \cdots, \bar F_r)^{k-1} \det(m_{f_0})\\
           =& \Res(\la x_1^{k-1}, \cdots, \la x_{k-2}^{k-1}, \bar F_{k-1}, \cdots, \bar F_r)^{k-1} \det (m_{f_0})\\
 =& \left(\Res(\la x_1^{k-1}, \cdots, \la x_{k-2}^{k-1})^{(k-1)^{(n-1)(k-1)+1}}\Res(\bar F_{k-1}, \cdots, \bar F_r)^{(k-1)^{k-2}}\right)^{k-1} \det (m_{f_0})\\
 =&\la^{(k-2)(k-1)^{n(k-1)}}\phi_{n-1, k}(\la)^{(k-1)^{k-1}}\det (m_{f_0})
\end{align*}
where $m_{f_0}$ is the multiplication map of the quotient algebra
\[A=\C[x_1, \cdots, x_r]/\langle f_1, \cdots, f_r\rangle\]
given by $m_{f_0}(x_1^{i_1}\cdots x_r^{i_r})=\la x_1^{i_1}\cdots x_r^{i_r}-x_1^{i_1+1}\cdots x_{k-1}^{i_{k-1}+1}x_{k}^{i_k} \cdots x_{r}^{i_r}$.

We choose a $\C$-basis $\mathbf{B}=\{\x^\c\mid \c\colon [r]\to [k-2]\}$ for $A$, where $\x^\c=x_1^{\c(1)}\cdots x_r^{\c(r)}$.
In fact, for any $\x^\c \in \mathbf{B}$, $\tilde\c=(\tilde\c(0),\c(1),\ldots,\c(r))$ can be viewed as a stable configuration on $P_n^k$, where
$0$ is the bank vertex whose value can be omitted.
We denote by $\B$ be the set of all stable configurations on $P_n^k$. Clearly, there is a one-to-one correspondence between $\mathbf{B}$ and $\B$ by ignoring the bank vertex,
and the left anti-lexicographical ordering on $\B$ gives a total ordering on $\mathbf{B}$.
To be precise, $x_1^{i_1}\cdots x_r^{i_r}\prec x_1^{j_1}\cdots x_r^{j_r}$
if and only if $\Sum_{t=1}^r i_t<\Sum_{t=1}^r j_t$, or $\Sum_{t=1}^r i_t=\Sum_{t=1}^r j_t$,
$i_s=j_s$ for $1\le s\le t-1$ and $i_t>j_t$ for some $t\in [r]$.

Retaining the notation in Section 4.2, we observe that $\B$ is exactly the disjoint union of $\B_0, \B_1, \cdots, \B_n$.
For each $s \in [n]$, the number of configurations in $\B_s$ is exactly $\mu_{n,k}(s)$.
Let $\c_0$ be a configuration in $\B_s$. By Lemma \ref{det of f_w in linear path}, we know
\[m_{f_0}(\x^{\c_0})=\x^{\bar\c_0}=\left(\la-\dfrac{g^{s-1}(1)}{\la^{k-1}}\right)\x^{\c_0}-\sum_{\c'\in \SV(\G')} h_{\c'}(\la)\x^{\c'},\]
where $\G'$, $\mathcal{S}(\G')$ and $h_{\c'}(\la)$ are defined in Proposition \ref{Linear Path Firing Graph} or Lemma \ref{det of f_w in linear path}.
On the other hand, by Proposition \ref{Linear Path Firing Graph} (ii), we have $\c_0\prec \c'$ for any $\c'$ in $\G'$.
It follows that the matrix of $m_{f_0}$ associated to the ordered basis $\mathbf{B}$ is a lower triangle matrix
with $\frac{g^{s-1}(1)}{\la^{k-1}}$ appearing on the diagonal exactly  $\mu_{n,k}(s)$ times for $s \in [0,n]$.
So
\[\det(m_{f_0})=\prod_{s=0}^{n}\left(\la-\dfrac{g^{s-1}(1)}{\la^{k-1}}\right)^{\mu_{n,k}(s)},\]
and
\begin{align*}
\phi^P_{n, k}(\la)=
\la^{(k-2)(k-1)^{n(k-1)}}\prod_{s=0}^{n}\left(\la-\dfrac{g^{s-1}(1)}{\la^{k-1}}\right)^{\mu_{n,k}(s)}
\phi_{n-1, k}(\la)^{(k-1)^{k-1}}.
\end{align*}
\end{proof}

\begin{exm} By Theorem \ref{charpath}, we get the characteristic polynomial $\phi^P_{n, k}(\la)$ of $P_n^k$ for some specified $n$ and $k$.

$\phi^P_{1,3}(\la)=\la^3(\la^3-1)^3$ of degree $12$;

$\phi^P_{2, 3}(\la)=\la^{35}(\la^3-1)^6(\la^3-2)^9$ of degree $80$;

$\phi^P_{3, 3}(\la)=\la^{151}(\la^3-1)^{27}(\la^3-2)^{18}(\la^6-3\la^3+1)^{27}$ of degree $448$;

$\phi^P_{4, 3}(\la)=\la^{891}(\la^3-1)^{201}(\la^3-2)^{81}(\la^3-3)^{81}(\la^6-3\la^3+1)^{54}$ of degree $2304$;

$\phi^P_{1, 4}(\la)=\la^{44}(t^4-1)^{16}$ of degree $108$;

$\phi^P_{2, 4}(\la)=\la^{2671}(\la^4-1)^{352}(\la^4-2)^{256}$ of degree $5103$;

$\phi^P_{3, 4}(\la)=\la^{95774}(\la^4-1)^{11440}(\la^4-2)^{5632}(\la^8-3\la^4+1)^{4096}$ of degree $196830$.
\end{exm}

\section{Starlike hypergraphs}
In the last section we will deal with a class of $k$-uniform hypergraphs, called \emph{starlike hypergraph} and denoted by $S^k_{n_1,\ldots,n_m}$, which is obtained from $m$ hyperpaths of length $n_1,\ldots,n_m$ by sharing a common vertex $w$, where $m \ge 1$ and $n_i \ge 1$ for $i \in [m]$; see Fig. \ref{starlike}.
When $m=1$, it is a hyperpath $P_{n_1}^k$.
When $n_i=1$ for each $i \in [m]$, it is called a \emph{hyperstar} with $m$ edges and denoted by $S^k_m$.

\begin{figure}[htbp]
\centering
\ifpdf
  \setlength{\unitlength}{1bp}%
   \begin{picture}(168.82, 145.40)(0,0)
  \put(0,0){\includegraphics{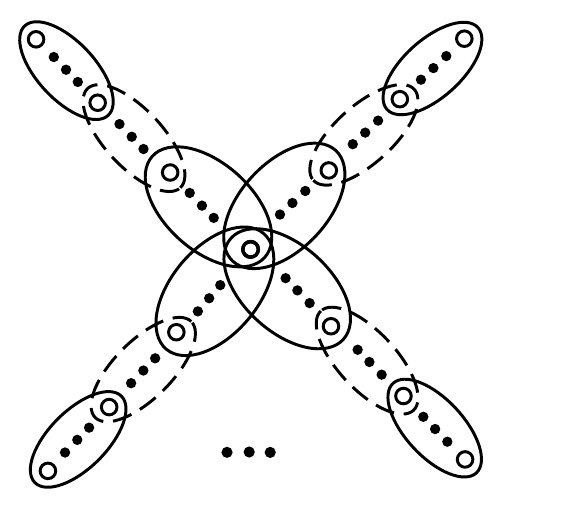}}
  \put(68.01,82.28){\fontsize{8.54}{10.24}\selectfont $w$}
  \put(10.50,96.64){\fontsize{8.54}{10.24}\selectfont $P^k_{n_1}$}
  \put(116.51,97.28){\fontsize{8.54}{10.24}\selectfont $P^k_{n_2}$}
  \put(10.67,42.10){\fontsize{8.54}{10.24}\selectfont $P^k_{n_m}$}
  \end{picture}%
  \caption{The starlike hypergraph $S^k_{n_1,n_2,\ldots,n_m}$}\label{starlike}
\end{figure}

\begin{thm}\label{shchar}
Let $S^k_{n_1,\ldots,n_m}$ be the $k$-uniform starlike hypergraph with exactly $t$ hyperpaths having length $1$, where $0 \le t \le m$.
Then the characteristic polynomial $\phi_{n_1,\ldots,n_m;k}^{SH}(\la)$ of $S^k_{n_1,\ldots,n_m}$ is
$$ \la^{(m(k-2)+t)(k-1)^{\Sum_{j=1}^m r_j}} \Prod_{i \in [m]\atop n_i>1} \phi^P_{n_i-1,k}(\la)^{(k-1)^{k-1+\Sum_{j \ne i}r_j}}
\Prod_{s_i \in [0,n_i]\atop 1 \le i \le m}\left(\la-\Sum_{i=1}^{m}\dfrac{g^{s_i-1}(1)}{\la^{k-1}}\right)^{\Prod_{i=1}^m \mu_{n_i,k}(s_i)},$$
where $r_i=n_i(k-1)$ for $i \in [m]$, $\phi^P_{n,k}(\la)$ denotes the characteristic polynomial of $P_n^k$,
$g^s(x)$ and $\mu_{n,k}(s)$ are defined in (\ref{fung}) and (\ref{mu}) respectively.
\end{thm}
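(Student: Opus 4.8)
The plan is to treat the apex $w$ as a cut vertex of $S^k_{n_1,\ldots,n_m}$ and invoke Corollary~\ref{charcut}. Deleting $w$ together with the $m$ edges through it (one per branch) produces the components $H_i=(V_i,E_i)$, where the $i$-th component is the $i$-th branch with its first edge removed. For $n_i>1$ this is the disjoint union of the shorter hyperpath $P^k_{n_i-1}$ with the $k-2$ vertices that lay only in the deleted edge and are now isolated; for $n_i=1$ it is simply $k-1$ isolated vertices. In both cases $|V_i|=r_i=n_i(k-1)$. Moreover $E_w^i$ consists of the single edge of the $i$-th branch through $w$, so the cut-vertex operator $m_{i,w}$ on $A_i$ is multiplication by the monomial $\x_{\e_{\widehat w}}$ attached to that edge, which is exactly the operator analysed for a single hyperpath in Section~4.

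First I would verify Assumption~1. Since $A_i=\C[x_v:v\in V_i]/\lr{f_v:v\in V_i}$ is literally the quotient algebra studied in Theorem~\ref{charpath} for $P^k_{n_i}$, it carries the monomial basis $\{\x^\c:\c\in\B^{(i)}\}$ indexed by the stable configurations of $P^k_{n_i}$, ordered by the left anti-lexicographical order $\prec$. Lemma~\ref{det of f_w in linear path} computes $m_{i,w}(\x^{\c_0})=\x^{\bar\c_0}=\frac{g^{s_i-1}(1)}{\la^{k-1}}\x^{\c_0}+\sum_{\c'\in\SV(\G')}h_{\c'}(\la)\x^{\c'}$ for $\c_0\in\B^{(i)}_{s_i}$, and Proposition~\ref{Linear Path Firing Graph}(ii) guarantees $\c_0\prec\c'$ for every $\c'\in\G'$. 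Hence, in increasing $\prec$-order, the matrix of $m_{i,w}$ is lower triangular with diagonal entry $g^{s_i-1}(1)/\la^{k-1}$ occurring on each of the $\mu_{n_i,k}(s_i)$ configurations of $\B^{(i)}_{s_i}$; this is precisely Assumption~1 with $d_i=(k-1)^{r_i}$ and $\alpha_{i,j_i}=g^{s_i-1}(1)/\la^{k-1}$.

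With Assumption~1 in force, Corollary~\ref{charcut} applies verbatim. Its determinant factor $\prod_{j_i}(\la-\sum_i\alpha_{i,j_i})$, regrouped according to the blocks $\B^{(1)}_{s_1},\ldots,\B^{(m)}_{s_m}$ containing the chosen indices, equals $\prod_{s_i\in[0,n_i]}\bigl(\la-\sum_{i=1}^m g^{s_i-1}(1)/\la^{k-1}\bigr)^{\prod_i\mu_{n_i,k}(s_i)}$, which is the third factor of the claim. It then remains to evaluate each $\phi_{H_i}(\la)$. Using Lemma~\ref{Res(FG)=ResFResG} to peel off the isolated vertices and Lemma~\ref{Res(la)=laRes} to evaluate $\Res(\la x^{k-1},\ldots,\la x^{k-1})=\la^{(k-2)(k-1)^{k-3}}$ on the $k-2$ isolated ones, I expect $\phi_{H_i}(\la)=\la^{(k-2)(k-1)^{r_i-1}}\,\phi^P_{n_i-1,k}(\la)^{(k-1)^{k-2}}$ when $n_i>1$, and $\phi_{H_i}(\la)=\la^{(k-1)^{k-1}}$ when $n_i=1$.

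The last step is pure bookkeeping of exponents. Raising $\phi_{H_i}(\la)$ to the power $(k-1)^{1+\sum_{j\ne i}r_j}$ prescribed by Corollary~\ref{charcut} and multiplying over $i$, the hyperpath factors assemble into $\prod_{n_i>1}\phi^P_{n_i-1,k}(\la)^{(k-1)^{k-1+\sum_{j\ne i}r_j}}$, the second factor, since $(k-2)+\bigl(1+\sum_{j\ne i}r_j\bigr)=k-1+\sum_{j\ne i}r_j$. For the pure powers of $\la$, each long branch contributes exponent $(k-2)(k-1)^{\sum_j r_j}$ (using $r_i-1+1+\sum_{j\ne i}r_j=\sum_j r_j$) and each short branch contributes $(k-1)(k-1)^{\sum_j r_j}$; summing over the $m-t$ long and $t$ short branches yields $\bigl((m-t)(k-2)+t(k-1)\bigr)(k-1)^{\sum_j r_j}=(m(k-2)+t)(k-1)^{\sum_j r_j}$, the first factor. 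The main obstacle is not this final arithmetic but the verification of Assumption~1: one must be certain that the firing-graph description of a single hyperpath (Proposition~\ref{Linear Path Firing Graph} and Lemma~\ref{det of f_w in linear path}) transfers intact to each branch $A_i$, that the per-branch triangular bases are correctly identified, and that the structure of $H_i$ as an isolated part glued to a shorter hyperpath is pinned down, since that split drives every exponent above.
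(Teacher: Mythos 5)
Your proposal is correct and follows essentially the same route as the paper: treat $w$ as a cut vertex, identify each $H_i$ as $P^k_{n_i-1}$ plus isolated vertices (or all isolated vertices when $n_i=1$), evaluate $\phi_{H_i}$ via Lemmas \ref{Res(FG)=ResFResG} and \ref{Res(la)=laRes}, and verify the triangularity of each $m_{i,w}$ on the stable-configuration basis via Lemma \ref{det of f_w in linear path} and Proposition \ref{Linear Path Firing Graph} before applying Corollary \ref{charcut}. The exponent bookkeeping, including the $(m(k-2)+t)(k-1)^{\sum_j r_j}$ power of $\la$, matches the paper's equations (\ref{SH3}) and (\ref{SH4}).
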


\begin{proof}
For each $i \in [m]$, let $\e_i$ be the edge of $P_{n_i}^k$ containing $w$, and $\widehat\e_i=\e_i \backslash \{w\}$.
Let $H_i$ be sub-hyperpath of $P_{n_i}^k$ by deleting the vertex $w$ and the edge $\e_i$ for $i \in [m]$,
which is a hyperpath $P_{n_i-1}^k$ together with $k-2$ isolated vertices if $n_i>1$ or $r_i(=k-1)$ isolated vertices otherwise.
If $n_i>1$, by Lemma \ref{Res(FG)=ResFResG} and Lemma \ref{Res(la)=laRes}, we get
\begin{equation}\label{SH1}\phi_{H_i}(\la)=\la^{(k-2)(k-1)^{r_i-1}}\phi_{n_i-1,k}^P(\la)^{(k-1)^{k-2}};\end{equation}
otherwise,
\begin{equation}\label{SH2}\phi_{H_i}(\la)=\la^{(k-1)(k-1)^{r_i-1}}.\end{equation}
So, combining (\ref{SH1}) and (\ref{SH2}), we have
\begin{equation}\label{SH3}
\Prod_{i=1}^m \phi_{H_i}^{(k-1)^{1+\Sum_{j \ne i} r_j}}=\la^{(m(k-2)+t)(k-1)^{\Sum_{j=1}^m r_j}} \Prod_{i \in [m]\atop n_i>1} \phi^P_{n_i-1,k}(\la)^{(k-1)^{k-1+\Sum_{j \ne i}r_j}}.
\end{equation}

Let $V$ be the vertex set of $S^k_{n_1,\ldots,n_m}$, and let $V_i$ be the vertex set of $P_{n_i}^k$ for $i \in [m]$.
Note that $w$ is a cut vertex of $S^k_{n_1,\ldots,n_m}$ and recall the discussion in Section 3.2.
The quotient algebra $A=\dfrac{\C[x_v: v \in V\backslash \{w\}]}{\lr{f_v:v \in V\backslash \{w\}}}$
holds
$$ A =A_1 \otimes \cdots \otimes A_m,$$
where $A_i=\dfrac{\C[x_v: v \in V_i\backslash \{w\}]}{\lr{f_v:v \in V_i\backslash \{w\}}}$ for $i \in [m]$,
and
$$f_v=\la x_v^{k-1}-\Sum_{v \in \e \in E(P_{n_i}^k)}x_{\e \backslash \{v\}}-\Sum_{v \in \widehat\e_i}x_{\widehat\e_i \backslash \{v\}}, \; v \in V_i\backslash \{w\}, i \in [m].$$
Let
$$f_w=\la-\Sum_{i=1}^m x_{\widehat\e_i}$$
and $m_{f_w} \colon A \to A$ be the linear map given by multiplication by $f_w$.
Then
$$m_{f_w}=\la \Id_A-\Sum_{i=1}^m \Id_{A_1} \otimes \cdots \otimes \Id_{A_{i-1}} \otimes m_{i,w} \otimes \Id_{A_{i+1}} \otimes \cdots \otimes \Id_{A_m},$$
where $m_{i,w}: A_i \to A_i$ is the linear map given by multiplication by $\x_{\widehat\e_i}$ for $i \in [m]$.

Fix an $i \in [m]$.
Let $\mathbf{B}=\{\x^{\c}|\c: V_i\backslash \{w\} \to [k-2]\}$ be a basis of $A_i$, where $\x^{\c}=\prod_{v \in V_i\backslash \{w\}}x_v^{\c(v)}$,
and let $\mathcal{B}$ be the set of stable configuration of $P^k_{n_i}$ with $w$ as the bank vertex.
As discussed in Theorem \ref{charpath}, there is a one-to-one correspondence between $\mathbf{B}$ and $\mathcal{B}$ by ignoring the bank vertex.
We also have a left anti-lexicographical ordering $\prec$ on $\mathbf{B}$ arising from the order of $\mathcal{B}$.
Retaining the notation in Section 4.2, $\mathcal{B}$ is the disjoint union of $\mathcal{B}_0,\mathcal{B}_1,\ldots,\mathcal{B}_{n_i}$,
and the number of configurations in $\mathcal{B}_s$ is $\mu_{n_i,k}(s)$ for each $s \in [0,n_i]$.

For each $\c_0 \in \mathcal{B}_s$, by Lemma \ref{det of f_w in linear path},
$$m_{i,w}(\x^{\c_0})=\x^{\bar\c_0}=\dfrac{g^{s-1}(1)}{\la^{k-1}}\x^{\c_0}+\sum_{\c' \in \mathcal{S}(\G')}h_{\c'}(\la)\x^{\c'},$$
where $\G'$, $\mathcal{S}(\G')$ and $h_{\c'}(\la)$ are defined in Proposition \ref{Linear Path Firing Graph} or Lemma \ref{det of f_w in linear path}.
Note that $\c_0 \prec \c'$ for any $c' \in \G'$.
So the matrix of $m_{i,w}$ associated with the basis $\mathbf{B}$ under the above order  is a lower triangle matrix with
$\frac{g^{s-1}(1)}{\la^{k-1}}$ appearing on the diagonal exactly  $\mu_{n_i,k}(s)$ times for $s \in [0,n_i]$.

By the above discussion,
\begin{equation}\label{SH4}
\det(m_{f_w})=\Prod_{s_i \in [0,n_i]\atop 1 \le i \le m}\left(\la-\Sum_{i=1}^{m}\dfrac{g^{s_i-1}(1)}{\la^{k-1}}\right)^{\Prod_{i=1}^m \mu_{n_i,k}(s_i)}.
\end{equation}
The result follows by Corollary \ref{charcut} and the equalities (\ref{SH3}) and (\ref{SH4}).
\end{proof}

Taking $n_i=1$ for $i \in [m]$ in Theorem \ref{shchar}, we get the characteristic polynomial of hyperstar $S_m^k$.

\begin{cor}
Let $\phi_{n,k}^S$ be the characteristic polynomial of the $k$-uniform hyperstar $S_m^k$ with $m$ edges.
Then
\begin{align*}
\phi_{m,k}^S(\la)=\la^{r(k-1)^r} \prod\limits_{p=0}^m \left(\la-\dfrac{p}{\la^{k-1}}\right)^{{m \choose p} k^{(k-2)p}\left((k-1)^{{k-1}}-k^{k-2}\right)^{n-p} }
\end{align*}
where $r=m(k-1)$.
\end{cor}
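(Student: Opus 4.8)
The plan is to specialize Theorem \ref{shchar} to the case $n_1=\cdots=n_m=1$, in which all $m$ hyperpaths have length $1$, so that $t=m$, each $r_i=n_i(k-1)=k-1$, and $\Sum_{j=1}^m r_j=m(k-1)=r$. First I would rewrite the three factors of the formula in Theorem \ref{shchar} under this specialization. The leading exponent of $\la$ becomes $(m(k-2)+t)(k-1)^{\Sum_j r_j}=m(k-1)(k-1)^r=r(k-1)^r$, which reproduces the prefactor $\la^{r(k-1)^r}$. Since no index $i$ satisfies $n_i>1$, the middle product $\Prod_{i\in[m],\,n_i>1}\phi^P_{n_i-1,k}(\la)^{(\cdots)}$ is empty and equals $1$.

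The substance is the third factor. For $n=1$ the index $s$ ranges over $\{0,1\}$, and from (\ref{mu}) I would record the two values $\mu_{1,k}(0)=(k-1)^{k-1}-k^{k-2}$ (the case $s\in[0,n-1]=\{0\}$) and $\mu_{1,k}(1)=k^{k-2}$ (the case $s=n$). The key observation is that the diagonal entry $\Sum_{i=1}^m g^{s_i-1}(1)/\la^{k-1}$ depends on the tuple $(s_1,\dots,s_m)\in\{0,1\}^m$ only through the number $p=\#\{i:s_i=1\}$: indeed $g^{-1}(1)=0$ and $g^{0}(1)=1$ by (\ref{fung}), so each coordinate $s_i=0$ contributes $0$ and each $s_i=1$ contributes $1$, whence $\Sum_{i=1}^m g^{s_i-1}(1)/\la^{k-1}=p/\la^{k-1}$.

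It then remains to collect terms. For a tuple with exactly $p$ ones I would compute the exponent $\Prod_{i=1}^m\mu_{1,k}(s_i)=\mu_{1,k}(1)^p\,\mu_{1,k}(0)^{m-p}=k^{(k-2)p}\big((k-1)^{k-1}-k^{k-2}\big)^{m-p}$, which depends only on $p$. Since there are $\binom{m}{p}$ such tuples and each contributes the same linear factor $\big(\la-p/\la^{k-1}\big)$, grouping the product in (\ref{SH4}) over $(s_1,\dots,s_m)$ according to the value of $p$ collapses the multi-index product into $\Prod_{p=0}^m\big(\la-p/\la^{k-1}\big)^{\binom{m}{p}k^{(k-2)p}((k-1)^{k-1}-k^{k-2})^{m-p}}$. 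Multiplying this by the prefactor $\la^{r(k-1)^r}$ and the trivial middle factor yields the stated formula.

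I do not expect a genuine obstacle, as this is a direct specialization of Theorem \ref{shchar}; the only point requiring care is the combinatorial regrouping of the multi-index product into a binomial product, which rests essentially on the fact that $g^{s_i-1}(1)/\la^{k-1}$ takes only the two values $0$ and $1/\la^{k-1}$ once $n_i=1$. One should also read the exponent $n-p$ appearing in the statement as $m-p$.
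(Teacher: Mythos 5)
Your specialization is correct and is exactly what the paper does: it derives this corollary in one line by setting $n_i=1$ for all $i$ in Theorem \ref{shchar}, and your verification of the prefactor exponent, the values $\mu_{1,k}(0)=(k-1)^{k-1}-k^{k-2}$, $\mu_{1,k}(1)=k^{k-2}$, the collapse of $\Sum_i g^{s_i-1}(1)$ to $p$, and the binomial regrouping fills in precisely the intended details. You are also right that the exponent $n-p$ in the statement is a typo for $m-p$.
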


\begin{cor}\cite[Theorem 4.3]{CD} Let $E$ be the $k$-uniform hypergraph with $k$ vertices and a single edge. Then
\[\phi_E(\la)=\la^{k(k-1)^{k-1}-k^{k-1}}(\la^k-1)^{k^{k-2}}.\]
\end{cor}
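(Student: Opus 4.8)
The plan is to treat $E$ as the degenerate single-edge hyperpath $P_1^k$ (equivalently the hyperstar $S_1^k$, i.e.\ the case $m=1$ of the preceding corollary) and to run the Poisson formula of Lemma \ref{lower dim formula} together with the firing-graph determinant computation of Lemma \ref{det of f_w in linear path} specialised to $n=1$. The slick route is simply to set $m=1$ in the hyperstar corollary and simplify; but since that corollary is derived through the cut-vertex Corollary \ref{charcut} (which genuinely needs $m\ge 2$), I prefer the self-contained version. Concretely, take the vertex $0$ of the single edge as the bank vertex; as $0$ lies in only one edge it is a cored vertex, and deleting it leaves the subhypergraph $\widehat E$ consisting of the $k-1$ isolated vertices $1,\dots,k-1$.

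First I would compute the homogeneous resultant. For isolated vertices $\bar F_v=\la x_v^{k-1}$, so pulling out one factor of $\la$ from each of the $k-1$ degree-$(k-1)$ polynomials via Lemma \ref{Res(la)=laRes}, together with $\Res(x_1^{k-1},\dots,x_{k-1}^{k-1})=1$, gives $\phi_{\widehat E}(\la)=\la^{(k-1)^{k-1}}$. Feeding this into Lemma \ref{lower dim formula} yields $\phi_E(\la)=\la^{(k-1)^{k}}\det(m_{f_0})$, where $f_0=\la-x_1\cdots x_{k-1}$ and $m_{f_0}$ acts on the $(k-1)^{k-1}$-dimensional algebra $A=\C[x_1,\dots,x_{k-1}]/\langle f_1,\dots,f_{k-1}\rangle$, with basis indexed by the stable configurations $\c$ of $P_1^k$.

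The heart is the determinant. For $n=1$ the set $\B$ of stable configurations is the disjoint union $\B_0\cup\B_1$, where $\B_1=\B_n$ consists of the critical configurations on $K_k$ and $\B_0$ of the non-critical stable ones. By Example \ref{critical conf of completed graph} there are $k^{k-2}$ critical configurations, and since $\dim A=(k-1)^{k-1}$ we get $|\B_0|=(k-1)^{k-1}-k^{k-2}$; these match $\mu_{1,k}(1)=k^{k-2}$ and $\mu_{1,k}(0)=(k-1)^{k-1}-k^{k-2}$ from (\ref{mu}). Proposition \ref{Linear Path Firing Graph}(ii) renders the matrix of $m_{f_0}$ lower triangular in the anti-lexicographic basis, and Lemma \ref{det of f_w in linear path} with the initial values $g^{-1}(1)=0$ and $g^{0}(1)=1$ of (\ref{fung}) produces the diagonal entries $\la$ (for the $\mu_{1,k}(0)$ configurations in $\B_0$) and $\la-\la^{-(k-1)}$ (for the $k^{k-2}$ configurations in $\B_1$, where $\G'$ is empty since $s=n$). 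Hence $\det(m_{f_0})=\la^{(k-1)^{k-1}-k^{k-2}}\bigl(\la-\la^{-(k-1)}\bigr)^{k^{k-2}}$.

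Finally I would assemble and simplify: writing $\la-\la^{-(k-1)}=\la^{-(k-1)}(\la^k-1)$ turns the determinant into $\la^{(k-1)^{k-1}-k^{k-2}-(k-1)k^{k-2}}(\la^k-1)^{k^{k-2}}$, and multiplying by the prefactor $\la^{(k-1)^{k}}$ collapses the exponent of $\la$ to $k(k-1)^{k-1}-k^{k-1}$ after the identities $(k-1)^{k}+(k-1)^{k-1}=k(k-1)^{k-1}$ and $k^{k-2}+(k-1)k^{k-2}=k^{k-1}$, giving the claimed $\phi_E(\la)=\la^{k(k-1)^{k-1}-k^{k-1}}(\la^k-1)^{k^{k-2}}$. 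The only point requiring care is that $n=1$ lies at the boundary of Theorem \ref{charpath} (stated for $n\ge 2$): I would check that the firing-graph machinery of Lemma \ref{det of f_w in linear path} still applies verbatim in this degenerate case, with $\G'$ empty for the critical configurations. Beyond that the argument is pure exponent bookkeeping, so there is no substantial obstacle.
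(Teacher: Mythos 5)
Your computation is correct, and every step checks out: the resultant of the isolated-vertex system is $\la^{(k-1)^{k-1}}$, the Poisson prefactor is $\la^{(k-1)^k}$, the stable configurations split as $|\B_1|=k^{k-2}$ critical ones (Example \ref{critical conf of completed graph}) and $|\B_0|=(k-1)^{k-1}-k^{k-2}$ non-critical ones, the diagonal entries $\la$ and $\la-\la^{-(k-1)}$ come out of Lemma \ref{det of f_w in linear path} with $g^{-1}(1)=0$, $g^{0}(1)=1$, and the exponent bookkeeping collapses to $k(k-1)^{k-1}-k^{k-1}$. However, your route is not the paper's: the paper gives no proof at all for this corollary, simply citing Cooper--Dutle and positioning it so that it reads as the $m=1$ specialization of the hyperstar corollary (itself the $n_i=1$ case of Theorem \ref{shchar}). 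Your self-contained derivation is arguably the more honest one, for exactly the reason you flag: Theorem \ref{shchar} is proved through Corollary \ref{charcut}, which requires $w$ to be a cut vertex, i.e.\ at least two components after deletion, so $m=1$ sits outside its literal hypotheses; likewise Theorem \ref{charpath} is stated only for $n\ge 2$, so the single edge is genuinely a base case of the recursion that needs an independent argument. What your approach buys is that base case from the paper's own machinery (cored-vertex Poisson formula plus the $n=1$ firing graph, where $\G'$ is empty on $\B_1$ and the firing graph is a short directed path on $\B_0$), at the cost of redoing the triangularity argument that the paper delegates to the external citation. The one point you defer --- that Lemma \ref{det of f_w in linear path} and Proposition \ref{Linear Path Firing Graph} apply verbatim at $n=1$ --- does hold, since neither statement carries an $n\ge 2$ hypothesis and the $s=n$ case explicitly allows $\G'=\varnothing$.
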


\begin{cor}Let $S_2^k$ be a $k$-uniform hyperstar with two edges or hyperpath with two edges. Then
\[\phi_{S_2^k}(\la)=\la^{\mu_k}(\la^k-1)^{2k^{k-2}\left((k-1)^{k-1}-k^{k-2}\right)}(\la^k-2)^{k^{2(k-2)}},\]
where $\mu_k=(2k-1)(k-1)^{2(k-1)}-2k^{k-1}(k-1)^{k-1}+k^{2k-3}$.
\end{cor}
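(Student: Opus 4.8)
The plan is to specialize the starlike formula of Theorem \ref{shchar}. First I would observe that $S_2^k$, the hyperstar (equivalently hyperpath) with two edges, is exactly the starlike hypergraph $S^k_{1,1}$, so we apply Theorem \ref{shchar} with $m=2$, $n_1=n_2=1$, and $t=2$ (both constituent hyperpaths have length $1$). With these parameters $r_1=r_2=k-1$, so $\Sum_{j=1}^m r_j=2(k-1)$ and $m(k-2)+t=2(k-1)$; moreover the index set $\{i\in[m]:n_i>1\}$ is empty, so the middle product over $i$ with $n_i>1$ disappears entirely. Hence the whole expression reduces to the leading power $\la^{2(k-1)(k-1)^{2(k-1)}}=\la^{2(k-1)^{2k-1}}$ times the product over the pairs $(s_1,s_2)\in\{0,1\}^2$.

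Next I would evaluate the ingredients of that product. From \eqref{fung} we have $g^{-1}(1)=0$ and $g^0(1)=1$, so $\frac{g^{s-1}(1)}{\la^{k-1}}$ equals $0$ when $s=0$ and $\frac{1}{\la^{k-1}}$ when $s=1$; and from \eqref{mu} with $n=1$ we get $\mu_{1,k}(0)=(k-1)^{k-1}-k^{k-2}$ and $\mu_{1,k}(1)=k^{k-2}$. Writing out the four pairs then produces four factors: the pair $(0,0)$ yields $\la$ with exponent $((k-1)^{k-1}-k^{k-2})^2$; the two pairs $(0,1),(1,0)$ each yield $\la-\frac{1}{\la^{k-1}}=\frac{\la^k-1}{\la^{k-1}}$ with exponent $k^{k-2}((k-1)^{k-1}-k^{k-2})$; and $(1,1)$ yields $\la-\frac{2}{\la^{k-1}}=\frac{\la^k-2}{\la^{k-1}}$ with exponent $k^{2(k-2)}$. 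The genuine polynomial factors are already in final form: $(\la^k-1)$ appears to the power $2k^{k-2}((k-1)^{k-1}-k^{k-2})$ and $(\la^k-2)$ to the power $k^{2(k-2)}$, matching the claim.

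The last step, and the only piece of real bookkeeping, is to collect the powers of $\la$. These come from four sources: the leading $2(k-1)^{2k-1}$; the factor $\la^{((k-1)^{k-1}-k^{k-2})^2}$ from $(0,0)$; and the denominators $\la^{-(k-1)}$ in the two $(\la^k-1)$ factors and the single $(\la^k-2)$ factor, contributing $-2(k-1)k^{k-2}((k-1)^{k-1}-k^{k-2})$ and $-(k-1)k^{2(k-2)}$. Expanding $((k-1)^{k-1}-k^{k-2})^2$ and regrouping, the terms carrying $k^{2(k-2)}$ collapse via $1+2(k-1)-(k-1)=k$ to $k^{2k-3}$; the cross terms $-2(k-1)^{k-1}k^{k-2}-2(k-1)^{k}k^{k-2}$ collapse via $1+(k-1)=k$ to $-2k^{k-1}(k-1)^{k-1}$; and $2(k-1)^{2k-1}+(k-1)^{2(k-1)}=(2k-1)(k-1)^{2(k-1)}$. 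Summing these three contributions yields precisely $\mu_k=(2k-1)(k-1)^{2(k-1)}-2k^{k-1}(k-1)^{k-1}+k^{2k-3}$, which would complete the proof. The main (and really only) obstacle is a sign or exponent slip in this collection step; I would guard against it by testing at $k=3$ against the tabulated value $\phi^P_{2,3}(\la)=\la^{35}(\la^3-1)^6(\la^3-2)^9$, where indeed $\mu_3=5\cdot16-2\cdot9\cdot4+27=35$ and the two polynomial exponents evaluate to $6$ and $9$.
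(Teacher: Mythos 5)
Your proposal is correct and follows the same route the paper intends: the corollary is stated without proof as a direct specialization of Theorem \ref{shchar} (equivalently, of the preceding hyperstar corollary with $m=2$), and your parameter choices, the evaluation of $g^{s-1}(1)$ and $\mu_{1,k}(s)$, and the collection of the powers of $\la$ into $\mu_k$ all check out, including the sanity check at $k=3$ against $\phi^P_{2,3}(\la)=\la^{35}(\la^3-1)^6(\la^3-2)^9$.
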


\begin{cor}
The characteristic polynomial of the starlike hypergraph $S^k_{1,1,2}$ is
\begin{align*}
\phi_{1,1,2;k}^{SH}(\la)&=\la^{(4k-3)(k-1)^{4(k-1)}-(k^{k-1}+3k^{k-2})(k-1)^{3(k-1)}+3k^{2(k-2)}(k-1)^{2(k-1)}-k^{3(k-2)}(k-1)^{k-1}}\\
&\cdot (\la^k-1)^{k^{k-2}(k-1)^{3(k-1)}}\\
&\cdot \left(\la-\frac{1}{\la^{k-1}}\right)^{k^{k-2}(3(k-1)^{k-1}-k^{k-2})((k-1)^{k-1}-k^{k-2})^2}\\
&\cdot \left(\la-\frac{2}{\la^{k-1}}\right)^{k^{2(k-2)}(3(k-1)^{k-1}-2k^{k-2})((k-1)^{k-1}-k^{k-2})}\\
&\cdot \left(\la-\frac{3}{\la^{k-1}}\right)^{k^{3(k-2)}((k-1)^{k-1}-k^{k-2})}\\
&\cdot \left(\la-\frac{\la}{\la^k-1}\right)^{k^{2(k-2)}((k-1)^{k-1}-k^{k-2})^2}\\
&\cdot \left(\la-\frac{1}{\la^{k-1}}-\frac{\la}{\la^k-1}\right)^{2k^{3(k-2)}((k-1)^{k-1}-k^{k-2})}\\
&\cdot \left(\la-\frac{2}{\la^{k-1}}-\frac{\la}{\la^k-1}\right)^{k^{4(k-2)}}.
\end{align*}
\end{cor}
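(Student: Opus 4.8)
The plan is to specialize Theorem~\ref{shchar} to the case $m=3$, $(n_1,n_2,n_3)=(1,1,2)$, $t=2$, and then to make every ingredient of that formula explicit. First I would record the combinatorial data. Here $r_1=r_2=k-1$ and $r_3=2(k-1)$, so $\Sum_{j=1}^3 r_j=4(k-1)$, which fixes the front factor as $\la^{(3(k-2)+2)(k-1)^{4(k-1)}}=\la^{(3k-4)(k-1)^{4(k-1)}}$. The only index with $n_i>1$ is $i=3$, so the middle product collapses to the single factor $\phi^P_{1,k}(\la)^{(k-1)^{3(k-1)}}$; substituting the single-edge value $\phi^P_{1,k}(\la)=\la^{k(k-1)^{k-1}-k^{k-1}}(\la^k-1)^{k^{k-2}}$ produces the factor $(\la^k-1)^{k^{k-2}(k-1)^{3(k-1)}}$ together with a contribution $(k(k-1)^{k-1}-k^{k-1})(k-1)^{3(k-1)}$ to the total power of $\la$.

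Next I would evaluate the determinant product. From the recursion \eqref{fung} one has $g^{-1}(1)=0$, $g^{0}(1)=1$, and $g^{1}(1)=\la^k/(\la^k-1)$, so that $\frac{g^{s-1}(1)}{\la^{k-1}}$ equals $0$, $\frac{1}{\la^{k-1}}$, or $\frac{\la}{\la^k-1}$ according as $s=0,1,2$. Likewise I record $\mu_{1,k}(0)=(k-1)^{k-1}-k^{k-2}$, $\mu_{1,k}(1)=k^{k-2}$, $\mu_{2,k}(0)=((k-1)^{k-1}-k^{k-2})(k-1)^{k-1}$, $\mu_{2,k}(1)=k^{k-2}((k-1)^{k-1}-k^{k-2})$, and $\mu_{2,k}(2)=k^{2(k-2)}$. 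The product in Theorem~\ref{shchar} ranges over the $2\cdot 2\cdot 3=12$ triples $(s_1,s_2,s_3)$ with $s_1,s_2\in[0,1]$ and $s_3\in[0,2]$, each triple contributing a factor $\bigl(\la-\Sum_i \frac{g^{s_i-1}(1)}{\la^{k-1}}\bigr)$ raised to $\mu_{1,k}(s_1)\mu_{1,k}(s_2)\mu_{2,k}(s_3)$. I would then group the twelve triples by the value of the offset $\Sum_i \frac{g^{s_i-1}(1)}{\la^{k-1}}$, of which there are seven distinct values: $0$, $\frac{1}{\la^{k-1}}$, $\frac{2}{\la^{k-1}}$, $\frac{3}{\la^{k-1}}$, $\frac{\la}{\la^k-1}$, $\frac{1}{\la^{k-1}}+\frac{\la}{\la^k-1}$, and $\frac{2}{\la^{k-1}}+\frac{\la}{\la^k-1}$, realized by $1,3,3,1,1,2,1$ triples respectively. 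Abbreviating $D=(k-1)^{k-1}-k^{k-2}$ and summing the multiplicities within each group, the six nonzero offsets yield exactly the six displayed factors, with exponents $k^{k-2}D^2(3(k-1)^{k-1}-k^{k-2})$, $k^{2(k-2)}D(3(k-1)^{k-1}-2k^{k-2})$, $k^{3(k-2)}D$, $k^{2(k-2)}D^2$, $2k^{3(k-2)}D$, and $k^{4(k-2)}$ in the order above.

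Finally I would collect all powers of $\la$. The offset $0$ comes only from $(0,0,0)$, whose factor is literally $\la$ raised to $\mu_{1,k}(0)^2\mu_{2,k}(0)=D^3(k-1)^{k-1}$; this power of $\la$ must be merged with the front contribution $(3k-4)(k-1)^{4(k-1)}$ and the middle contribution $(k(k-1)^{k-1}-k^{k-1})(k-1)^{3(k-1)}$ found above. Expanding $D^3(k-1)^{k-1}=((k-1)^{k-1}-k^{k-2})^3(k-1)^{k-1}$ by the binomial theorem and adding the three contributions gives the stated exponent $(4k-3)(k-1)^{4(k-1)}-(k^{k-1}+3k^{k-2})(k-1)^{3(k-1)}+3k^{2(k-2)}(k-1)^{2(k-1)}-k^{3(k-2)}(k-1)^{k-1}$, finishing the identification.

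The whole argument is a direct specialization, so the mathematical content is entirely supplied by Theorem~\ref{shchar}. The real work — and the step most prone to an arithmetic slip — is the careful bookkeeping of the twelve multiplicities together with their grouping, and in particular the reconciliation of the several separate powers of $\la$ (the front factor, the $\la$-part of $\phi^P_{1,k}$, and the $(0,0,0)$ term) into the single compound exponent in the statement. I would guard against errors by cross-checking the resulting exponents against the already recorded special cases $S_2^k$ and $E$, whose factors arise from the same $g$- and $\mu$-values.
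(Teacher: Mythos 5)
Your proposal is correct and follows exactly the route the paper intends: the corollary is stated without separate proof as a direct specialization of Theorem \ref{shchar} to $m=3$, $(n_1,n_2,n_3)=(1,1,2)$, $t=2$, and your bookkeeping of the twelve triples $(s_1,s_2,s_3)$, their offsets, the grouped multiplicities, and the merged power of $\la$ (including the binomial expansion of $((k-1)^{k-1}-k^{k-2})^3(k-1)^{k-1}$) all check out.
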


In particular,
the characteristic polynomial $\phi_{1,1,2;3}^{SH}(\la)$ of $S^3_{1,1,2}$ is
$$\la^{980}(\la^3-1)^{75}(\la^3-2)^{54}(\la^3-3)^{27}(\la^4-2\la)^9(\la^6-3\la^3+1)^{54}(\la^6-4\la^3+2)^{81}$$
of degree $2294$.

\section{Conculusion}
We give an explicit and recursive formula for the characteristic polynomial of the adjacency tensor of a starlike hypergraph,
  which is a resultant of a system of polynomials related to the structure of the hypergraph.
The variants of chip-firing game on simple graphs such as dollar game on simple graphs or hypergraphs are applied to the such kinds of resultants.
So we provide a combinatorial method for computing resultants, which will have potential use for commutative algebra, algebraic geometry and physical fields.

We note that there are many numerical methods and algorithms for computing partial (real or extreme) eigenvalues of a general (symmetric) tensor;
see e.g. Chang et.al \cite{Chang} and the references therein.
We also note the starlike hypergraph is a power hypergraph $G^k$, which is obtained from a starlike simple graph $G$ by adding $k-2$ vertices to each of its edges.
Zhou et.al \cite{Zhou} proved that if $\la$ is a nonzero eigenvalue of $G$ or any subgraph of $G$, then $\la^{\frac{2}{k}}$ is an eigenvalue of $G^k$.
In fact, the nonzero eigenvalues of  $G^k$ are exactly those eigenvalues arising from $G$ in the above way.
However we do not know the algebraic multiplicities of the eigenvalues (including the zero eigenvalue) from their result.

\subsection*{Acknowledgments}
This work is supported by supported by Natural Science Foundation of China (Grant No. 11771016 and 61501003)

\vspace{0.5cm}

{\footnotesize
\noindent Yan-Hong Bao \\
School of Mathematical Sciences, Anhui University, Hefei 230601, China\\
E-mail address: baoyh@ahu.edu.cn

\vspace{2mm}
\noindent Yi-Zheng Fan \\
School of Mathematical Sciences, Anhui University, Hefei 230601, China\\
E-mail address: fanyz@ahu.edu.cn

\vspace{2mm}
\noindent Yi Wang \\
School of Mathematical Sciences, Anhui University, Hefei 230601, China\\
E-mail address: wangy@ahu.edu.cn

\vspace{2mm}
\noindent Ming Zhu \\
School of Electronics and Information Engineering, Anhui University, Hefei 230601, China\\
E-mail address: zhuming@ahu.edu.cn
}

\end{document}